\newtheorem{theorem}{Theorem}[section]
\newtheorem{proposition}[theorem]{Proposition}
\newtheorem{lemma}[theorem]{Lemma}
\theoremstyle{definition}
\newtheorem{definition}[theorem]{Definition}
\newtheorem{example}[theorem]{Example}
\newtheorem{problem}[theorem]{Problem}
\newtheorem{conjecture}[theorem]{Conjecture}
\newcommand{\ZZ}{ \ensuremath{\mathbb{Z}}}
\def\cocoa{{\hbox{\rm C\kern-.13em o\kern-.07em C\kern-.13em o\kern-.15em A}}}
\newcommand{\lk}{\mathrm{lk}}
\newcommand{\ob}{\overline}
\newcommand{\LL}{\mathcal{L}}
\newcommand{\MM}{\mathcal{M}}
\begin{document}

\title[Face vectors of Buchsbaum complexes]{Face vectors of two-dimensional Buchsbaum complexes}

\author{Satoshi Murai}
\address{
Department of Mathematics\\
Graduate School of Science\\
Kyoto University\\
Sakyo-ku, Kyoto, 606-8502, Japan.
}


\begin{abstract}
In this paper, we characterize all possible $h$-vectors of $2$-dimensional Buchsbaum simplicial complexes.
\end{abstract}

\maketitle

\section{Introduction}
Given a class $\mathcal{C}$ of simplicial complexes,
to characterize the face vectors of simplicial complexes in $\mathcal{C}$
is one of central problems in combinatorics.
In this paper, we study face vectors of $2$-dimensional Buchsbaum simplicial complexes.

We recall the basics of simplicial complexes.
A \textit{simplicial complex} $\Delta$ on $[n]=\{1,2,\dots,n\}$
is a collection of subsets of $[n]$
satisfying that (i) $\{i\} \in \Delta$ for all $i \in [n]$
and (ii) if $F \in \Delta$ and $G \subset F$ then $G \in \Delta$.
An element $F$ of $\Delta$ is called a \textit{face} of $\Delta$
and maximal faces of $\Delta$ under inclusion are called \textit{facets} of $\Delta$.
A simplicial complex is said to be \textit{pure}
if all its facets have the same cardinality.
Let $f_k (\Delta)$ be the number of faces $F \in \Delta$ with $|F|=k+1$,
where $|F|$ is the cardinality of $F$.
The \textit{dimension} of $\Delta$ is 
$\dim \Delta = \max \{ k : f_k(\Delta) \ne 0\}$.
The vector $f(\Delta) = (f_{-1}(\Delta),f_0(\Delta), \dots, f_{d-1}(\Delta))$ is called the \textit{$f$-vector} (or \textit{face vector}) of $\Delta$,
where $d= \dim \Delta +1$ and where $f_{-1}(\Delta) =1$.
When we study face vectors of simplicial complexes,
it is sometimes convenient to consider $h$-vectors.
Recall that the {\em $h$-vector} $h(\Delta)=(h_0(\Delta),h_1(\Delta),\dots,h_d(\Delta))$ of
$\Delta$ is  defined by the relation
$\sum_{i=0}^{d} f_{i-1}(\Delta)(x-1)^{d-i}
=\sum_{i=0}^{d} h_{i}(\Delta)x^{d-i}.
$
Thus knowing $f(\Delta)$ is equivalent to knowing $h(\Delta)$.
Let $\tilde H_i(\Delta;K)$ be the reduced homology groups of $\Delta$
over a field $K$. The numbers $\beta_i(\Delta)=\dim_K \tilde H_i(\Delta;K)$ are called the \textit{Betti numbers} of $\Delta$ (over $K$).
The \textit{link} of 
$\Delta$ with respect to $F \in \Delta$ is the simplicial complex
$\lk_\Delta(F)=\{G \subset [n]\setminus F: G \cup F \in \Delta\}$.

In the study of face vectors of simplicial complexes,
one of important classes of simplicial complexes
are Cohen--Macaulay complexes, which come from commutative algebra theory.
A $(d-1)$-dimensional simplicial complex $\Delta$ is said to be \textit{Cohen--Macaulay} if for every face $F \in \Delta$ (including the empty face), $\beta_i(\lk_\Delta(F))=0$ for $i \ne d-1-|F|$.
Given positive integers $a$ and $d$,
there exists the unique representation of $a$,
called the {\em $d$-th Macaulay representation}
of $a$, of the form
\begin{eqnarray*}
a = {a(d) + d  \choose d}
+ {a(d-1) + d - 1 \choose d - 1}
+ \cdots +
{a(k) + k \choose k},
\end{eqnarray*}
where $ k \geq 1$ and where
$a(d) \geq \cdots \geq a(k) \geq 0$.
Define
\begin{eqnarray*}
a^{\langle d \rangle} = {a(d) + d +1 \choose d+1}
+ {a(d-1) + d \choose d}
+ \cdots +
{a(k) + k +1 \choose k+1}
\end{eqnarray*}
and $0^{\langle d \rangle}=0$.
The following classical result due to Stanley \cite[Theorem 6]{St}
has played an important role in face vector theory.

\begin{theorem}(Stanley)
A vector $(1,h_1,\dots,h_d) \in \ZZ^{d+1}$ is the $h$-vector of a $(d-1)$-dimensional Cohen--Macaulay complex if and only if
$h_1 \geq 0$ and $0 \leq h_{i+1} \leq h_i^{\langle i\rangle}$ for $i=1,2,\dots,d-1$.
\end{theorem}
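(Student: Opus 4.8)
The plan is to translate the statement into commutative algebra through the Stanley--Reisner ring and then invoke Macaulay's theorem on Hilbert functions of standard graded algebras. Write $S=K[x_1,\dots,x_n]$ and let $K[\Delta]=S/I_\Delta$ be the Stanley--Reisner ring of $\Delta$, whose Hilbert function is determined by the $f$-vector; a standard computation shows that its Hilbert series equals $\sum_i h_i(\Delta)t^i/(1-t)^d$, so the $h$-vector is exactly the numerator data. The combinatorial definition of Cohen--Macaulay given above is Reisner's criterion, hence is equivalent to $K[\Delta]$ being a Cohen--Macaulay ring of Krull dimension $d$.

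For necessity, suppose $\Delta$ is Cohen--Macaulay of dimension $d-1$. After harmlessly enlarging $K$ to an infinite field, which changes neither the $h$-vector nor the Cohen--Macaulay property, I may choose a linear system of parameters $\theta_1,\dots,\theta_d$. Since $K[\Delta]$ is Cohen--Macaulay this is a regular sequence, so the Artinian reduction $A=K[\Delta]/(\theta_1,\dots,\theta_d)$ is a standard graded $K$-algebra with $\dim_K A_i=h_i(\Delta)$. In particular $h_0=1$, $h_1=\dim_K A_1\ge 0$, and all $h_i\ge 0$. Because $A$ is a quotient of a polynomial ring in $h_1$ variables by a homogeneous ideal, Macaulay's theorem forces $h_{i+1}\le h_i^{\langle i\rangle}$ for every $i\ge 1$, which is exactly the asserted chain of inequalities.

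For sufficiency I would reverse the process by constructing an explicit complex that is even shellable, since shellable complexes satisfy Reisner's criterion and are therefore Cohen--Macaulay. Given a vector $(1,h_1,\dots,h_d)$ obeying the Macaulay growth conditions, Macaulay's theorem supplies an order ideal of monomials $M$ in the variables $y_1,\dots,y_{h_1}$ whose number of monomials of degree $i$ equals $h_i$; one may take $M$ to be the lexicographic segment. The task is then to manufacture a pure $(d-1)$-dimensional shellable complex whose restriction sets in a shelling reproduce $M$, so that $h_i$ counts the facets whose restriction has size $i$. I would do this by encoding each monomial of $M$, viewed as a multiset of size at most $d$, as a facet on a vertex set of size roughly $h_1+d$ via a polarization/staircase device, and then ordering the facets compatibly with the lex order on $M$ to exhibit a shelling.

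The main obstacle is the sufficiency construction: Macaulay's theorem gives the numerical realizability at once, but producing an honest simplicial complex that is simultaneously pure of the correct dimension, Cohen--Macaulay, and carries precisely the prescribed $h$-vector requires care. The delicate point is verifying that the facet order coming from the monomials of $M$ is genuinely a shelling, i.e.\ that each facet meets the union of the earlier ones in a pure codimension-one subcomplex and that the resulting restriction-face count matches $(h_0,\dots,h_d)$; this combinatorial bookkeeping, rather than the algebra, is where the real work lies.
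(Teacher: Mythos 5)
First, a remark on the comparison you were asked for: the paper does not prove this statement at all --- it is quoted from Stanley's paper \cite{St} and used as a black box --- so there is no proof of record to measure yours against, and I can only assess your argument on its own terms. Your necessity direction is the standard argument and is essentially complete: Reisner's criterion identifies the combinatorial Cohen--Macaulay condition with $K[\Delta]$ being a Cohen--Macaulay ring, a field extension changes neither the $h$-vector nor the reduced Betti numbers of links, a linear system of parameters is then a regular sequence, the Artinian reduction has Hilbert function $(h_0,\dots,h_d)$, and Macaulay's theorem gives $h_1\ge 0$ and $0\le h_{i+1}\le h_i^{\langle i\rangle}$.

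The sufficiency direction, however, contains a genuine gap, and you have correctly located it yourself: everything after ``I would reverse the process'' is a plan rather than a proof, and the plan is precisely the hard half of the theorem. Two points need to be supplied. First, the monomials of the order ideal $M$ have degrees ranging over $0,1,\dots,d$, so the staircase map $y_{i_1}\cdots y_{i_e}\mapsto\{i_1,i_2+1,\dots,i_e+e-1\}$ produces faces of varying cardinality; each must be padded with $d-e$ fixed ``dummy'' vertices (say the last $d-e$ of a reserved block) to obtain a pure $(d-1)$-dimensional complex, and one must check that this padding forces the restriction face of $F_\mu$ in the eventual shelling to be exactly the staircase image of $\mu$, so that $h_i$ counts the degree-$i$ monomials. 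Second, the assertion that ordering the facets by increasing degree of $\mu$ (lex within a degree) is a shelling is the entire content of the implication: one must verify that every face of $F_\mu$ not containing the prospective restriction face lies in an earlier facet, and this verification uses in an essential way that $M$ is an order ideal (closed under divisibility), not merely that it has the right number of monomials in each degree. Until that combinatorial check is written out --- or replaced by a citation to an explicit realization result --- the ``if'' half of the theorem is not established.
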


There is another interesting class of simplicial complexes arising from
commutative algebra, called Buchsbaum complexes.
A simplicial complex $\Delta$ is said to be \textit{Buchsbaum}
if it is pure and $\lk_\Delta(v)$ is Cohen--Macaulay for every vertex $v$ of $\Delta$.
Thus the class of Buchsbaum complexes contains
the class of Cohen--Macaulay complexes.
Buchsbaum complexes are important since all triangulations of topological manifolds are Buchsbaum, while most of them are not Cohen--Macaulay.
Several nice necessity conditions on $h$-vectors of Buchsbaum complexes are known (e.g., \cite{Sc,NS}),
and these necessity conditions have been applied to study face vectors of triangulations of manifolds (e.g., \cite{N,NS,Sw}).
On the other hand, the characterization of $h$-vectors of $(d-1)$-dimensional Buchsbaum complexes is a mysterious open problem.
About this problem,
the first non-trivial case is $d=3$ since every $1$-dimensional simplicial complexes (without isolated vertices) are Buchsbaum.
In 1995, Terai \cite{Te} proposed a conjecture on the characterization of $h$-vectors of Buchsbaum complexes of a special type including all $2$-dimensional connected Buchsbaum complexes,
and proved the necessity of the conjecture.
The main result of this paper is to prove the sufficiency of Terai's conjecture for $2$-dimensional Buchsbaum complexes.
As a consequence of this result,
we obtain the following characterizations of $h$-vectors.

\begin{theorem}
\label{1-1}
A vector $h=(1,h_1,h_2,h_3) \in \ZZ^4$ is the $h$-vector of
a $2$-dimensional connected Buchsbaum complex if and only if
the following conditions hold:
\begin{itemize}
\item[(i)] $0 \leq h_1$;
\item[(ii)] $0 \leq h_2 \leq {h_1+1 \choose 2}$;
\item[(iii)] $- \frac 1 3 h_2 \leq h_3 \leq h_2^{\langle 2 \rangle}$.
\end{itemize}
\end{theorem}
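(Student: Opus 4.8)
The plan is to translate everything into face numbers and then treat the two implications separately. In dimension two the $f$--$h$ dictionary reads $f_0=h_1+3$, $f_1=h_2+2h_1+3$, $f_2=h_1+h_2+h_3+1$, and in particular $h_3=f_2-f_1+f_0-1=\tilde\beta_2-\tilde\beta_1$ (using $\tilde\beta_0=0$ for a connected complex), while a direct computation gives the identity
\begin{equation*}
h_2+3h_3=3f_2-2f_1+f_0=\sum_{v}\bigl(b_v-a_v+1\bigr),
\end{equation*}
where $a_v$ and $b_v$ denote the numbers of vertices and edges of $\lk_\Delta(v)$. I will use this identity for the sharp lower bound in (iii).

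For necessity, let $\Delta$ be connected $2$-dimensional Buchsbaum with a generic linear system of parameters $\Theta$. Schenzel's formula for Buchsbaum complexes gives $\dim_K(K[\Delta]/\Theta)_1=h_1$, $\dim_K(K[\Delta]/\Theta)_2=h_2$ and $\dim_K(K[\Delta]/\Theta)_3=h_3+\tilde\beta_1=\tilde\beta_2$. Since $K[\Delta]/\Theta$ is a standard graded Artinian algebra, its Hilbert function is nonnegative and satisfies Macaulay's bounds, which yields at once $h_1\ge0$, $0\le h_2\le h_1^{\langle1\rangle}=\binom{h_1+1}{2}$, and $\tilde\beta_2\le h_2^{\langle2\rangle}$; combined with $h_3=\tilde\beta_2-\tilde\beta_1\le\tilde\beta_2$ this is the upper bound in (iii). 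For the lower bound I use the displayed identity: each $\lk_\Delta(v)$ is a $1$-dimensional Cohen--Macaulay complex, hence a connected graph, so $b_v\ge a_v-1$ and every summand is nonnegative, giving $h_2+3h_3\ge0$.

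For sufficiency I would split on the sign of $h_3$. When $h_3\ge0$ the conditions (i)--(iii) are exactly the hypotheses of Stanley's theorem, so there is a $2$-dimensional Cohen--Macaulay complex with this $h$-vector; being Cohen--Macaulay of positive dimension it is connected and Buchsbaum, and we are done. When $h_3<0$ one must produce genuinely non-Cohen--Macaulay examples with $\tilde\beta_1=-h_3+\tilde\beta_2>0$. The basic tool is gluing along an edge: if two connected $2$-dimensional Buchsbaum complexes share exactly one edge and its two endpoints, the union is again connected, $2$-dimensional and Buchsbaum (each affected link is a connected graph obtained by identifying one vertex of two connected graphs), and by Mayer--Vietoris its $h$-vector satisfies $h_1=h_1'+h_1''+1$, $h_2=h_2'+h_2''$, $h_3=h_3'+h_3''$. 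Gluing ``negative'' pieces (such as a hollow triangulated cylinder, which has $h$-vector $(1,3,3,-1)$ with all vertex links paths) onto a Cohen--Macaulay piece realizes part of the region. More generally I plan to realize the extremal value $h_3=-\lfloor h_2/3\rfloor$ by an explicit complex whose vertex links are almost all trees, and then raise $h_3$ one unit at a time by filling in an empty triangle whose three edges are already present --- an operation that preserves purity, connectedness and the Buchsbaum property while fixing $h_1,h_2$ and increasing $h_3$ by $1$.

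The main obstacle is the extremal corner of the negative range, where $h_2$ is close to its maximum $\binom{h_1+1}{2}$ (so the $1$-skeleton is nearly complete) while $h_3$ is close to $-h_2/3$ (so almost every vertex link must be a spanning tree of a nearly complete graph). The edge-gluing additivity shows such complexes cannot be decomposed: splitting into $t\ge2$ pieces forces $\sum_k a_k=h_1-t+1$, and superadditivity of $x\mapsto\binom{x+1}{2}$ then gives the edge budget $\sum_k\binom{a_k+1}{2}\le\binom{h_1-t+2}{2}<\binom{h_1+1}{2}$, so a single, directly constructed complex is unavoidable here. Producing these ``tree-link'' complexes with prescribed $(f_0,f_1,f_2)$ --- essentially a combinatorial design problem --- and checking that the filling process then sweeps out every integer value of $h_3$ throughout the negative range (the non-negative range being covered by Stanley's theorem) is where I expect the real work to lie, and is presumably what the structural results developed in the body of the paper are for.
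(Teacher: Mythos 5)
Your necessity argument is sound and is essentially Terai's original proof, which the paper does not reproduce but simply cites: Schenzel's formula plus Macaulay's bound for the Artinian reduction gives (i), (ii) and the upper bound in (iii), and your identity $h_2+3h_3=\sum_v(b_v-a_v+1)$ is exactly the computation in the paper's Lemma 2.3, with connectedness of each vertex link supplying the lower bound. Your reduction of sufficiency to the case $h_3<0$ via Stanley's theorem also matches the paper, and your proposed strategy for that case --- build an extremal complex whose vertex links are trees, then raise $h_3$ by filling empty triangles all of whose edges are present, and raise $h_1$ and $h_2$ by attaching triangles along one or two existing edges --- is precisely the paper's plan (Lemma 2.5 and the outline at the start of Section 3).

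The genuine gap is exactly where you say you ``expect the real work to lie'': you never construct the extremal complexes, and without them the sufficiency direction is unproven. This construction is the entire technical content of the paper. For $n=h_1+3$ chosen minimally, the paper starts from Hanano's link-acyclic complex $\hat\Delta$ on $[n]$ with $h(\hat\Delta)=(1,n-3,3M,-M)$, where $M=\max\{k:3k\le\binom{n-2}{2}\}$, assembled from a cyclic base complex $\Sigma$ and a family of two-triangle pieces $\Delta(i,j)$; deleting $b$ of the pieces $\Delta(1,1),\dots,\Delta(1,b)$ yields $h=(1,n-3,3(M-b),-(M-b))$ while preserving the Buchsbaum and link-acyclic properties, after which $c$ carefully chosen empty triangles $G_{b+1},\dots,G_{b+c}$ are filled and up to two further triangles are attached along exactly two edges. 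The construction splits into three cases according to $n\bmod 3$, and the delicate point --- verifying that each added triangle meets the current complex in exactly the prescribed proper faces, which requires knowing precisely which edges are absent --- occupies Lemmas 3.2 through 3.10. Your edge-gluing of cylinders onto Cohen--Macaulay pieces cannot reach the extremal corner where $h_2$ is near $\binom{h_1+1}{2}$ and $h_3$ near $-h_2/3$, as your own superadditivity remark shows, so a direct construction is unavoidable and is missing from the proposal. What you have is an accurate road map of the paper's argument, not a proof.
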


\begin{theorem}
\label{1-2}
A vector $h=(1,h_1,h_2,h_3) \in \ZZ^4$ is the $h$-vector of
a $2$-dimensional Buchsbaum complex if and only if
there exist a vector $h'=(1,h_1',h_2',h_3') \in \ZZ^4$ satisfying the conditions in Theorem \ref{1-1} and an integer $k \geq 0$ such that
$h=h'+(0,3k,-3k,k)$.
\end{theorem}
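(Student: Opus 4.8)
The plan is to reduce Theorem \ref{1-2} to the connected case, Theorem \ref{1-1}, by understanding exactly how disjoint unions act on $h$-vectors; the geometric content is that the shift $(0,3k,-3k,k)$ records the effect of attaching $k$ disjoint $2$-simplices. I would begin with two stability observations. First, each connected component of a $2$-dimensional Buchsbaum complex is again a $2$-dimensional Buchsbaum complex: purity of dimension $2$ is inherited componentwise (every vertex lies in a triangle), and the link of a vertex is computed inside its own component, hence stays Cohen--Macaulay. Second, a disjoint union of $2$-dimensional Buchsbaum complexes is Buchsbaum, for the same local reasons. Thus a general such $\Delta$ is precisely a disjoint union $\Delta_1 \sqcup \cdots \sqcup \Delta_c$ of connected ones.

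The computational core is additivity of face numbers: $f_{-1}(\Delta)=1$ while $f_j(\Delta)=\sum_i f_j(\Delta_i)$ for $j\ge 0$. Substituting into the defining relation of the $h$-vector, the shared empty face forces a correction of $(c-1)(x-1)^3$, giving the polynomial identity $\sum_i h_{\Delta_i}(x) = h_\Delta(x)+(c-1)(x-1)^3$, equivalently
\[
h(\Delta)=(1,\sigma_1,\sigma_2,\sigma_3)+(c-1)(0,3,-3,1), \qquad \sigma_j:=\sum_{i=1}^c h_j(\Delta_i).
\]
Since a single $2$-simplex is Cohen--Macaulay, hence Buchsbaum, with $h$-vector $(1,0,0,0)$, this formula specializes to the claimed shift when the extra components are triangles.

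Sufficiency is then immediate. Given $h'$ satisfying (i)--(iii) of Theorem \ref{1-1}, that theorem produces a connected $2$-dimensional Buchsbaum complex $\Delta'$ with $h(\Delta')=h'$; setting $\Delta=\Delta'\sqcup T_1\sqcup\cdots\sqcup T_k$ with each $T_i$ a $2$-simplex yields a Buchsbaum complex (by the second stability fact) whose $h$-vector, by the displayed formula, is $h'+k(0,3,-3,1)=h$.

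For necessity I would decompose $\Delta$ into its $c$ components and apply the ``only if'' direction of Theorem \ref{1-1} to each, so every $h(\Delta_i)$ satisfies (i)--(iii); then I take $h'=(1,\sigma_1,\sigma_2,\sigma_3)$ and $k=c-1\ge 0$, reducing everything to checking that $h'$ obeys (i)--(iii). The linear constraints pass to the sum trivially; e.g.\ $\sigma_3\ge-\tfrac13\sigma_2$ is the sum of the inequalities $h_3(\Delta_i)\ge-\tfrac13 h_2(\Delta_i)$. The two nonlinear upper bounds are the real obstacle. The bound in (ii) needs $\sum_i\binom{h_1(\Delta_i)+1}{2}\le\binom{\sigma_1+1}{2}$, which is the elementary superadditivity of $m\mapsto\binom{m+1}{2}$. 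The delicate point is the upper bound in (iii): it requires superadditivity of Macaulay's operation, $a^{\langle 2\rangle}+b^{\langle 2\rangle}\le(a+b)^{\langle 2\rangle}$. I would isolate this as a lemma, reduce to two summands by induction, and verify it by comparing the second Macaulay representations $a=\binom{m_2}{2}+\binom{m_1}{1}$ of the summands with that of $a+b$; this inequality is the crux and the only step demanding genuine work.
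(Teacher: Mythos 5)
Your proposal is correct and follows essentially the same route as the paper: decompose $\Delta$ into connected components (resp.\ adjoin $k$ disjoint $2$-simplices), use the additivity formula $h(\Gamma \sqcup \Gamma') = h(\Gamma) + h(\Gamma') + (-1,3,-3,1)$, and reduce both directions to Theorem \ref{1-1}. The step you single out as the crux, the superadditivity $a^{\langle 2\rangle} + b^{\langle 2\rangle} \le (a+b)^{\langle 2\rangle}$, is precisely what the paper invokes when it asserts without proof that the componentwise sum of two $M$-vectors is again an $M$-vector, so your plan merely makes that standard fact explicit.
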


Note that one can always take $k= \lfloor \frac 1 6 (2 h_1+3 - \sqrt{8h_1 +8h_2 +9}) \rfloor = \max\{a : h_2 +3a \leq { {(h_1 -3a) +1} \choose 2}\}$,
where $\lfloor r \rfloor$ is the integer part of a real number $r$.
Indeed, if $(1,h_1',h_2',h_3')$ satisfies the conditions in Theorem \ref{1-1} and if $h_2'+3 \leq { h_1' -3 +1 \choose 2}$ then $(1,h_1'-3,h_2'+3,h_3'-1)$ again satisfies the conditions in Theorem \ref{1-1}.

This paper is organized as follows:
In section 2, some techniques for constructions
of Buchsbaum complexes will be introduced.
In section 3, we construct a Buchsbaum complex with the desired $h$-vector.
In section 4, we prove Theorem \ref{1-2} and
study $h$-vectors of $2$-dimensional Buchsbaum complexes with fixed Betti numbers.

\section{Terai's Conjecture}

We recall Terai's Conjecture \cite[Conjecture 2.3]{Te} on $h$-vectors of Buchsbaum complexes of a special type.
We say that a vector $(1,h_1,\dots,h_d) \in \ZZ^{d+1}$ is an
\textit{$M$-vector} if $h_1 \geq 0$ and $0 \leq h_{i+1} \leq h_i^{\langle i \rangle}$
for $i=1,2,\dots,d-1$.

\begin{conjecture}[Terai]
\label{2-1}
A vector $h=(1,h_1,\dots,h_d) \in \ZZ^{d+1}$ is the $h$-vector of
a $(d-1)$-dimensional Buchsbaum complex $\Delta$ such that $\beta_k(\Delta)=0$ for $k \leq d-3$ if and only if
the following conditions hold:
\begin{itemize}
\item[(a)] $(1,h_1,\dots,h_{d-1})$ is an $M$-vector;
\item[(b)] $- \frac 1 d h_{d-1} \leq h_d \leq h_{d-1}^{\langle d-1 \rangle}$.
\end{itemize}
\end{conjecture}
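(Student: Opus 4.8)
The statement is an ``if and only if,'' so I would treat the two implications separately, and I expect the necessity direction to be the routine one and the sufficiency direction --- actually producing a complex --- to be the crux. The first fact I would record, valid in every dimension, is the Euler-characteristic identity $h_d = \beta_{d-1}(\Delta) - \beta_{d-2}(\Delta)$, which follows from $h_d = (-1)^{d-1}\tilde\chi(\Delta)$ together with the hypothesis $\beta_k(\Delta)=0$ for $k \le d-3$. This explains why $h_d$ may be negative and pins down the role of the only two surviving Betti numbers.

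For necessity I would pass to commutative algebra. Let $A = K[\Delta]/(\theta)$ be the Artinian reduction of the Stanley--Reisner ring by a generic linear system of parameters $\theta_1,\dots,\theta_d$. Schenzel's formula for Buchsbaum complexes writes $\dim_K A_i$ as $h_i$ plus a correction that is a signed multiple of $\binom{d}{i}\beta_{j-1}(\Delta)$; under our hypothesis the only Betti numbers entering are $\beta_{d-2}$ and $\beta_{d-1}$, and these first appear in degree $d-1$, so $\dim_K A_i = h_i$ for all $i \le d-2$. Hence $(1,h_1,\dots,h_{d-2})$ is literally the Hilbert function of a standard graded algebra, and Macaulay's theorem gives the $M$-vector inequalities there; the Cohen--Macaulayness of the vertex links (the defining feature of Buchsbaum) is what upgrades this to the sharp top inequality $h_{d-1} \le h_{d-2}^{\langle d-2\rangle}$, via Schenzel's surjectivity and the Novik--Swartz structure results. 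Nonnegativity of $\dim_K A_{d-1} = h_{d-1} - d\,\beta_{d-2}(\Delta)$ then yields both $h_{d-1} \ge 0$ and the bound $\beta_{d-2}(\Delta) \le \tfrac1d h_{d-1}$, whence $h_d = \beta_{d-1}-\beta_{d-2} \ge -\beta_{d-2} \ge -\tfrac1d h_{d-1}$; the upper bound $h_d \le h_{d-1}^{\langle d-1\rangle}$ is Macaulay's growth bound one degree further. This is the direction already established by Terai.

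For sufficiency, the plan is constructive. Given $h$ satisfying (a) and (b), I would first commit to the target Betti numbers $\beta_{d-1} = \max\{0,h_d\}$ and $\beta_{d-2} = \max\{0,-h_d\}$, which are nonnegative, satisfy $\beta_{d-1}-\beta_{d-2}=h_d$, and are compatible with (b) exactly because (b) forces $\beta_{d-2} \le \tfrac1d h_{d-1}$. The backbone realizes the $M$-vector $(1,h_1,\dots,h_{d-1})$: by Stanley's theorem there is a $(d-2)$-dimensional Cohen--Macaulay complex with this $h$-vector, on top of which I would build a pure $(d-1)$-dimensional complex, adjusting $h_d$ to its prescribed value by adjoining top-dimensional faces in a Macaulay (lex) fashion to raise it toward the upper bound, while the homology blocks below lower it through $\beta_{d-2}$. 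To install the homology and stay Buchsbaum I would graft on standard Buchsbaum blocks --- boundaries of $d$-simplices, each a $(d-1)$-sphere contributing $1$ to $\beta_{d-1}$, and thickened spheres $S^{d-2}\times[0,1]$, each a $(d-1)$-manifold with boundary homotopy equivalent to $S^{d-2}$ and contributing $1$ to $\beta_{d-2}$ --- via connected sums or wedges that keep every vertex link Cohen--Macaulay. The constructions of Sections 2--3 are precisely the tools for carrying this out and controlling the resulting $h$-vector.

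The step I expect to be the main obstacle is exactly this last one in general dimension $d>3$: combining the backbone, the top-face adjustments for the Macaulay upper bound, and the homology-installing blocks so as to hit the $h$-vector \emph{exactly} while keeping \emph{every} vertex link Cohen--Macaulay is not understood combinatorially, and the two boundary cases of (b) --- the Macaulay upper bound and the sharp negative lower bound $-\tfrac1d h_{d-1}$ --- must be attained simultaneously. When $d=3$ these tensions evaporate, since vertex links are $1$-dimensional and hence Cohen--Macaulay as soon as they are connected, and the homology is carried by ordinary triangulated surfaces; this is why I would concentrate the constructive effort on $d=3$, realizing $(1,h_1,h_2,h_3)$ with the chosen $\beta_1,\beta_2$ by explicit $2$-complexes, and record the higher-dimensional construction as the genuinely open content of the conjecture.
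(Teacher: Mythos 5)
Your proposal correctly maps the terrain --- necessity is Terai's (with Schenzel/Novik--Swartz machinery), the case $h_d\ge 0$ reduces to Stanley's theorem, and the genuine content is the $d=3$ construction, with $d>3$ open --- but it stops exactly where the paper's work begins. The crux, stated as Proposition \ref{2-2}, is to construct for \emph{every} triple with $3w\le h_2\le\binom{h_1+1}{2}$ a connected $2$-dimensional Buchsbaum complex with $h=(1,h_1,h_2,-w)$, and your sketch of how to do this ("graft standard Buchsbaum blocks \dots via connected sums or wedges") is not only unexecuted but points in a direction that cannot reach the extremal case. By Lemma \ref{2-3}, $3h_3+h_2=\sum_v\beta_1(\lk_\Delta(v))$, so attaining the sharp lower bound $h_3=-\frac13 h_2$ forces \emph{every} vertex link to be a tree (link-acyclicity). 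Triangulated closed surfaces and blocks with interior vertices have cycle links and contribute strictly positively to this sum, so wedges and connected sums of such pieces can never realize the boundary case, nor the nearby vectors with $3w$ close to $h_2$. The paper instead uses Hanano-type complexes: delicate cyclic designs $\Sigma\cup\bigcup\Delta(i,j)$ on $[n]$ whose description splits into three cases according to $n\bmod 3$ (subsections 3.1--3.3), all of whose vertex links are trees, followed by a controlled perturbation scheme --- deleting pieces $\Delta(1,1),\dots,\Delta(1,b)$ and re-adding triangles $G_j$ one at a time, with Lemma \ref{2-5} certifying at each step both the Buchsbaum property and the exact increment $(0,0,0,1)$, $(0,0,1,0)$ or $(0,1,0,0)$ to the $h$-vector, plus the reduction Lemma \ref{3-1} to normalize $(h_1,h_2)$. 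None of this bookkeeping (in particular the verification, as in Lemmas \ref{3-4}, \ref{3-7} and \ref{3-10}, that the intersections $\Delta\cap\langle G_b\rangle$ have precisely the required form, which rests on an edge count via link-acyclicity) is present or replaceable by the generic phrase "controlling the resulting $h$-vector."

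Two smaller inaccuracies. First, in your necessity sketch, under the hypothesis $\beta_k(\Delta)=0$ for $k\le d-3$, Schenzel's formula gives $\dim_K A_{d-1}=h_{d-1}$ outright (the corrections involve only the vanishing Betti numbers); the quantity $h_{d-1}-d\,\beta_{d-2}(\Delta)\ge 0$ is the Novik--Swartz socle refinement, not the Schenzel Hilbert function, so your derivation of $\beta_{d-2}\le\frac1d h_{d-1}$ conflates the two (harmless here, since the paper simply cites Terai for this direction, but the attribution matters if one is reconstructing the argument). Second, committing in advance to $\beta_{d-1}=\max\{0,h_d\}$ and $\beta_{d-2}=\max\{0,-h_d\}$ is unnecessary and mildly misleading: the construction only needs the $h$-vector, and in the paper's complexes the relation $h_3+\beta_1=\beta_2$ is a consequence, not a design constraint.
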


Terai \cite{Te} proved the `only if' part of the above conjecture.
Thus the problem is to construct a Buchsbaum complex $\Delta$ such that
$\beta_k(\Delta)=0$ for $k \leq d-3$ and $h(\Delta)=h$.
Actually, if $h_d \geq 0$ then any vector $h \in \ZZ^4$ satisfying
(a) and (b) is an $M$-vector, so there exists a Cohen--Macaulay complex $\Delta$ with $h(\Delta)=h$ by Stanley's theorem.
Thus it is enough to consider the case when $h_d<0$.

From this viewpoint, Terai \cite{Te} and Hanano \cite{Ha} constructed a class of $2$-dimensional Buchsbaum complexes $\Delta$ with $h_3(\Delta)= -\frac 1 3 h_{2}(\Delta)$.
Also, by using Hanano's result, Terai and Yoshida \cite{TY}
proved the conjecture in the special case when $d=3$ and $h_2={h_1+1 \choose 2}$.

In this paper, we prove Conjecture \ref{2-1} when $d=3$,
which is equivalent to Theorem \ref{1-1}.
Since we only need to consider the case when $h_3 <0$,
what we must prove is the following statement.

\begin{proposition}
\label{2-2}
Let $h_1$, $h_2$ and $w$ be positive integers such that $3 w \leq h_2 \leq {h_1 +1 \choose 2}$.
There exists a $2$-dimensional connected Buchsbaum complex $\Delta$ such that
$h(\Delta)=(1,h_1,h_2,-w)$.
\end{proposition}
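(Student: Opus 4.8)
The plan is to translate the statement into face numbers and then build the complex by starting from a single extremal ``corner'' complex and enlarging it by two homotopy-preserving local moves. For $d=3$ the defining relation gives $f_0=h_1+3$, $f_1=h_2+2h_1+3$, and $f_2=h_3+h_2+h_1+1$; in particular $h_3$ equals the reduced Euler characteristic $f_2-f_1+f_0-1$, which for a connected complex is $\beta_2-\beta_1$. Thus realizing $h_3=-w$ with $w>0$ amounts to building a connected complex with $\beta_2(\Delta)=0$ and $\beta_1(\Delta)=w$. I would also use that, in dimension $2$, a complex is Buchsbaum exactly when it is pure and $\lk_\Delta(v)$ is a connected graph for every vertex $v$ (a $1$-dimensional complex is Cohen--Macaulay iff connected), and record that the constraint $h_2\le\binom{h_1+1}{2}$ is precisely $f_1\le\binom{f_0}{2}$, i.e.\ that the $1$-skeleton does not exceed the complete graph on the vertices.

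Next I would isolate two elementary moves on a pure $2$-dimensional complex, each of which is an elementary expansion (hence preserves the homotopy type, all Betti numbers, and the value of $h_3$). Move (B): pick an edge $\{a,b\}$ and add a new vertex $c$ together with $\{a,c\}$, $\{b,c\}$ and the triangle $\{a,b,c\}$; this changes $(h_1,h_2,h_3)$ by $(1,0,0)$. Move (E): pick present vertices $a,b,c$ with $\{a,c\},\{b,c\}\in\Delta$ but $\{a,b\}\notin\Delta$, and add $\{a,b\}$ together with $\{a,b,c\}$; this changes $(h_1,h_2,h_3)$ by $(0,1,0)$. A direct inspection of links shows both moves keep the complex pure, connected, and with all vertex links connected, so Buchsbaumness is preserved. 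Move (B) is always available, and the elementary fact that a connected non-complete graph contains a path $a$--$c$--$b$ with $a,b$ non-adjacent guarantees that (E) is available whenever $h_2<\binom{h_1+1}{2}$.

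Consequently, starting from any connected Buchsbaum complex with $h$-vector $(1,m,3w,-w)$, where $m=\min\{t:\binom{t+1}{2}\ge 3w\}$, I can apply (B) repeatedly to reach $(1,h_1,3w,-w)$ for every $h_1\ge m$, and then apply (E) repeatedly to reach $(1,h_1,h_2,-w)$ for every $h_2$ with $3w\le h_2\le\binom{h_1+1}{2}$. Since every target in Proposition~\ref{2-2} satisfies $\binom{h_1+1}{2}\ge h_2\ge 3w$ and hence $h_1\ge m$, this reduces the whole proposition to the single extremal corner case.

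The remaining, and genuinely hard, step is the construction of this corner complex: a connected Buchsbaum complex on the minimal number $m+3$ of vertices with $\beta_2=0$, $\beta_1=w$, and exactly $h_2=3w$ (equivalently the extremal relation $h_3=-\tfrac13 h_2$). I would attack it by induction on $w$, realizing the passage from $w$ to $w+1$ by a local hole-creating gadget that changes the $h$-vector by $(0,3,-1)$ or by $(1,3,-1)$ according to whether $m$ stays put or increases by one, adding two or three triangles and raising $\beta_1$ by exactly one. The main obstacle is exactly here: one must exhibit inside the current corner complex the local configuration on which the gadget acts, and verify that creating the new hole (removing a $2$-face while keeping its bounding edges supported, or gluing in a small handle) leaves every vertex link connected so that Buchsbaumness survives, while tracking the vertex-count bookkeeping that forces $m$ to increase by $0$ or $1$ at the correct values of $w$. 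A convenient conceptual anchor is that surfaces-with-boundary (disks with holes and their higher-genus analogues) are automatically Buchsbaum with $\beta_2=0$, so the target homology is that of an explicit triangulated surface; the real work is to realize it at the minimal vertex count demanded by $h_1=m$.
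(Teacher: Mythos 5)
Your reduction step is sound: moves (B) and (E) are exactly the paper's Lemma \ref{2-5}(iii) and (ii), and your observation that a connected non-complete $1$-skeleton always contains two vertices at distance two makes move (E) available whenever $h_2 < {h_1+1 \choose 2}$, which is in fact slightly cleaner than the paper's own reduction (Lemma \ref{3-1}, which only adds edges in the case $y={x+1 \choose 2}$, where the non-edges can be located explicitly). So you have correctly reduced the proposition to the extremal corner: a connected Buchsbaum complex with $h$-vector $(1,m,3w,-w)$, $m$ minimal with $3w \leq {m+1 \choose 2}$.

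But that corner case is the entire mathematical content of the paper, and your proposal does not prove it --- you explicitly flag it as ``the main obstacle'' and offer only an unspecified induction on $w$ via a ``hole-creating gadget.'' This is a genuine gap, not a routine verification. The difficulty is that at the minimal vertex count the $1$-skeleton is forced to be nearly complete ($3w > {m \choose 2}$), so the $w$ independent $1$-cycles must be packed very densely while every vertex link stays connected; a locally-placed gadget has no room to avoid interfering with the links of the many vertices it touches, and no local induction is known to work. The paper resolves this not by induction but by a global, symmetric construction (following Hanano): a cyclic base complex $\Sigma$ on $[n]$ together with an explicit family of two-triangle ``handles'' $\Delta(i,j)$, with three separate constructions according to $n \bmod 3$, for which Buchsbaumness is verified via Lemma \ref{2-4} and the $h$-vector via link-acyclicity (Lemmas \ref{3-2}--\ref{3-10}); one then deletes $b$ handles and refills $c$ holes (Lemma \ref{2-5}(i)) to hit intermediate values of $h_3$. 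Your gadget with $h$-vector increment $(0,0,3,-1)$ is indeed the shadow of the paper's $\Delta(i,j)$, but exhibiting where it can be attached, keeping all links connected, and doing the vertex-count bookkeeping is precisely what Sections 3.1--3.3 of the paper do, and nothing in your proposal substitutes for it. As written, the proposal establishes only the (comparatively easy) reduction, not the proposition.
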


In the rest of this section,
we introduce techniques to prove the above statement.
We first note the exact relations between $f$-vectors and $h$-vectors
when $d=3$.
\begin{eqnarray*}
\begin{array}{llll}
h_0=1, &h_1=f_0-3, &h_2=f_1-2f_0+3, &h_3= f_2-f_1+f_0-1,\\
f_{-1}=1, & f_0=h_1+3, & f_1=h_2+2h_1 +3, & f_2=h_3+h_2+h_1+1.\\
\end{array}
\end{eqnarray*}

\begin{lemma}
\label{2-3}
Let $\Delta$ be a $(d-1)$-dimensional Buchsbaum complex on $[n]$.
Then $h_d(\Delta)=-\frac 1 d h_{d-1}(\Delta)$ if and only if,
for every $v \in [n]$, $\beta_k(\lk_\Delta(v))=0$ for all $k$.
\end{lemma}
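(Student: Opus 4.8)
The plan is to reduce the Lemma to a single combinatorial identity expressing the top two entries of $h(\Delta)$ through the top $h$-numbers of the vertex links, and then to exploit the Cohen--Macaulayness of those links. Since $\Delta$ is Buchsbaum, for every vertex $v$ the link $\lk_\Delta(v)$ is Cohen--Macaulay of dimension $d-2$, so its $h$-vector has top entry $h_{d-1}(\lk_\Delta(v))$. The key claim I would establish is the identity
\[
d\, h_d(\Delta) + h_{d-1}(\Delta) = \sum_{v \in [n]} h_{d-1}(\lk_\Delta(v)).
\]
Granting this, the statement follows at once: the equation $h_d(\Delta) = -\frac{1}{d} h_{d-1}(\Delta)$ is exactly the vanishing of the right-hand side.

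To prove the identity I would first record the elementary counting relation $\sum_{v \in [n]} f_{i-1}(\lk_\Delta(v)) = (i+1) f_i(\Delta)$, which holds because a face of $\Delta$ of cardinality $i+1$ contributes an $(i-1)$-face to precisely its $i+1$ vertex links. I would then expand both sides of the target identity in terms of the face numbers $f_i(\Delta)$: the right-hand side combines the standard conversion of $f$-numbers into $h$-numbers for each $(d-2)$-dimensional link with the counting relation, while the left-hand side uses the corresponding conversion for $\Delta$ itself. The substance of the argument — and the step where I expect the alternating-sign bookkeeping to be the main obstacle — is verifying that after reindexing both sides collapse to the same expression $\sum_{i}(-1)^{d-1-i}(i+1)f_i(\Delta)$. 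I note that this identity is purely a statement about face numbers; purity of $\Delta$ enters only to guarantee $\dim \lk_\Delta(v) = d-2$ for every $v$, so that $h_{d-1}(\lk_\Delta(v))$ really is the top $h$-number.

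Finally I would invoke the standard fact that a Cohen--Macaulay complex of dimension $d-2$ has reduced homology concentrated in top degree, and that its top $h$-number equals its top reduced Betti number, i.e.\ $h_{d-1}(\lk_\Delta(v)) = \beta_{d-2}(\lk_\Delta(v)) \geq 0$ for every $v$. Thus the right-hand side of the identity is a sum of nonnegative integers, and it vanishes if and only if $\beta_{d-2}(\lk_\Delta(v)) = 0$ for every $v$. Since each link is Cohen--Macaulay, $\beta_k(\lk_\Delta(v)) = 0$ already for $k \neq d-2$, so this is equivalent to the vanishing of $\beta_k(\lk_\Delta(v))$ for all $k$ and all $v$. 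Combined with the identity, this shows that $h_d(\Delta) = -\frac{1}{d} h_{d-1}(\Delta)$ holds precisely when every vertex link has vanishing reduced homology, which is the assertion of the Lemma.
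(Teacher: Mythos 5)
Your proposal is correct and follows essentially the same route as the paper: your key identity $d\,h_d(\Delta)+h_{d-1}(\Delta)=\sum_{v}h_{d-1}(\lk_\Delta(v))$ is exactly the paper's computation, since the top $h$-number of the $(d-2)$-dimensional link is precisely the alternating sum $\sum_{k}(-1)^{d-1-k}f_{k-1}(\lk_\Delta(v))$ that the paper obtains via the same vertex--link counting relation. Your final step (Cohen--Macaulayness forces $h_{d-1}(\lk_\Delta(v))=\beta_{d-2}(\lk_\Delta(v))\geq 0$, so the sum vanishes iff every link is acyclic) is likewise the paper's concluding argument.
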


\begin{proof}
The statement follows from the next computation.
\begin{eqnarray*}
d h_d +h_{d-1} &=& \sum_{k=0}^{d} (-1)^{d-k} k f_{k-1}(\Delta)\\
&=& \sum_{v \in [n]} \left( \sum_{k=0}^{d-1} (-1)^{d-1-k} f_{k-1}\big(\lk_\Delta(v) \big) \right)\\
&=& \sum_{v \in [n]} \beta_{d-2}\big( \lk_\Delta(v) \big). 
\end{eqnarray*}
The second equation follows from $\sum_{v \in [n]} f_{k-2}(\lk_\Delta(v))
=k f_{k-1}(\Delta)$, 
and, for the third equation,
we use the Buchsbaum property together with the well-known equation
$\sum_{k=0}^{d-1}(-1)^{d-1-k}\beta_{k-1}(\lk_\Delta(v))
=\sum_{k=0}^{d-1}(-1)^{d-1-k}f_{k-1}(\lk_\Delta(v))$.
\end{proof}

\begin{definition}
We say that a Buchsbaum complex $\Delta$ on $[n]$ is \textit{link-acyclic} if $\Delta$ satisfies one of the conditions in Lemma \ref{2-3}.
\end{definition}

Every $1$-dimensional simplicial complex is identified with a simple graph, and, in this special case,
the Cohen--Macaulay property is equivalent to the connectedness.
Thus a $2$-dimensional pure simplicial complex is Buchsbaum if and only if its every vertex link is a connected graph.
Moreover, a $2$-dimensional Buchsbaum complex is link-acyclic if and only if its every vertex link is a tree.
From this simple observation, it is easy to prove the following statements.

\begin{lemma}
\label{2-4}
Let $\Delta$ be a $2$-dimensional Buchsbaum complex  on $[n]$ and let $\Delta_1,\dots,\Delta_t$ be $2$-dimensional simplicial complexes whose vertex set is contained in $[n]$.
\begin{itemize}
\item[(i)] If $\Delta \cup \Delta_k$ is Buchsbaum for $k=1,2,\dots,t$ then $\Delta \cup \Delta_1 \cup \cdots \cup \Delta_j$
is also Buchsbaum for $j=1,2,\dots,t$.
\item[(ii)] If $\Delta$ is link-acyclic then any $2$-dimensional Buchsbaum complex $\Gamma \subset \Delta$ is also link-acyclic.
\end{itemize}
\end{lemma}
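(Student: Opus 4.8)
The plan is to reduce both parts to the two combinatorial criteria recorded just above the statement: a pure $2$-dimensional complex is Buchsbaum exactly when all of its vertex links are connected graphs, and a $2$-dimensional Buchsbaum complex is link-acyclic exactly when all of its vertex links are trees. The point I would want to extract from the hypotheses at the very start is that $\Delta$ lives \emph{on} $[n]$, so $\{v\}\in\Delta$ for every $v\in[n]$; since each $\Delta_k$ has its vertices inside $[n]$, every vertex occurring in any of the unions below is already a vertex of $\Delta$. Consequently $\lk_\Delta(v)$ is always a connected graph (by the Buchsbaum property of $\Delta$) and is nonempty (since $\Delta$ is pure $2$-dimensional, so $v$ lies in a $2$-face). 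This is exactly what prevents the $\Delta_k$ from contributing ``free-floating'' links, and it is really the hypothesis that makes (i) correct.

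For part (i) I would argue directly. Write $\Gamma_j=\Delta\cup\Delta_1\cup\cdots\cup\Delta_j$. Purity is immediate: any face of $\Gamma_j$ lies either in $\Delta$ or in some $\Delta_k$, hence in $\Delta$ or in the pure $2$-dimensional complex $\Delta\cup\Delta_k$, so it is contained in a $2$-face of $\Gamma_j$. For the links I would use the identity $\lk_{\Gamma_j}(v)=\lk_\Delta(v)\cup\bigcup_{k\le j}\lk_{\Delta_k}(v)$, which follows at once from the fact that $G\cup\{v\}\in\Gamma_j$ iff $G\cup\{v\}$ lies in one of the pieces. Now each set $\lk_\Delta(v)\cup\lk_{\Delta_k}(v)=\lk_{\Delta\cup\Delta_k}(v)$ is connected because $\Delta\cup\Delta_k$ is Buchsbaum, and all of these connected graphs share the common nonempty connected subgraph $\lk_\Delta(v)$. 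Since a union of connected graphs that all contain one common nonempty connected subgraph is connected, $\lk_{\Gamma_j}(v)$ is connected for every vertex $v$, and the criterion then yields that $\Gamma_j$ is Buchsbaum.

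Part (ii) is shorter. Fix a vertex $v$ of the Buchsbaum complex $\Gamma\subset\Delta$; then $v$ is a vertex of $\Delta$, and $\lk_\Gamma(v)$ is a subgraph of $\lk_\Delta(v)$, because $G\cup\{v\}\in\Gamma\subset\Delta$. Since $\Delta$ is link-acyclic, $\lk_\Delta(v)$ is a tree, so $\lk_\Gamma(v)$, being a subgraph of a tree, is acyclic; since $\Gamma$ is Buchsbaum, $\lk_\Gamma(v)$ is connected. A connected acyclic graph is a tree, so every vertex link of $\Gamma$ is a tree, and the criterion gives that $\Gamma$ is link-acyclic.

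The only nonformal steps are the two elementary graph facts used above — that a union of connected graphs sharing a common nonempty connected subgraph is connected, and that a connected subgraph of a tree is again a tree — together with the bookkeeping about vertex sets. I expect the main (indeed the only) obstacle to be making explicit that the $\Delta_k$ introduce no new vertices, since this is precisely where the ``on $[n]$'' hypothesis enters: without it the common core $\lk_\Delta(v)$ could be empty and the conclusion of (i) would genuinely fail.
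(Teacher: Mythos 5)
Your proof is correct and follows essentially the same route as the paper: for (i) the paper also exploits that every $\lk_{\Delta\cup\Delta_k}(v)$ is connected and contains the common nonempty subgraph $\lk_\Delta(v)$ (phrased there as joining an arbitrary vertex $u$ of the union's link to a fixed vertex $v_0$ of $\lk_\Delta(v)$ by a path inside one $\lk_{\Delta\cup\Delta_k}(v)$), and for (ii) it likewise observes that $\lk_\Gamma(v)$ is a connected subgraph of the tree $\lk_\Delta(v)$. Your explicit remarks on purity and on the nonemptiness of $\lk_\Delta(v)$ just make precise points the paper leaves implicit.
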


\begin{proof}
(i) Without loss of generality we may assume $j=t$.
Let $\Sigma = \Delta \cup \Delta_1 \cup \cdots \cup \Delta_t$.
Fix $v \in [n]$.
What we must prove is $\lk_\Sigma(v)$ is connected.
Let $v_0$ be a vertex of $\lk_\Delta (v)$.
For every vertex $u$ of $\lk_\Sigma(v)$ there exists a $k$
such that $u$ is a vertex of $\lk_{\Delta \cup \Delta_k}(v)$.
By the assumption, there exists a sequence $u=u_0,u_1,\dots,u_r=v_0$ such that $\{u_i,u_{i+1}\} \in \lk_{\Delta \cup \Delta_k}(v) \subset \lk_\Sigma(v)$ for $i=0,1,\dots,r-1$.
Hence $\lk_\Sigma(v)$ is connected.

(ii) For every vertex $v$ of $\Gamma$,
$\lk_\Gamma(v)$ is connected and $\lk_\Gamma(v) \subset \lk_\Delta(v)$.
Since $\lk_\Delta(v)$ is a tree, $\lk_\Gamma(v)$ is also a tree.
\end{proof}

For a collection $C$ of subsets of $[n]$,
we write $\langle C \rangle$ for the simplicial complex generated by the elements in $C$.
When $C=\{F\}$, we simply write $\langle C \rangle =\langle F \rangle$.

\begin{lemma}
\label{2-5}
Let $\Delta$ be a $2$-dimensional Buchsbaum complex and $F=\{a,b,c\}$.
Set $\Gamma = \Delta \cup \langle F\rangle$.
\begin{itemize}
\item[(i)] If $\Delta \cap \langle F\rangle= \langle \{a,b\},\{a,c\},\{b,c\}\rangle$ then
$\Gamma$ is Buchsbaum and $h(\Gamma)=h(\Delta)+(0,0,0,1)$.
\item[(ii)] If $\Delta \cap \langle F\rangle= \langle\{a,b\},\{a,c\}\rangle$ then
$\Gamma$ is Buchsbaum and $h(\Gamma)=h(\Delta)+(0,0,1,0)$.
\item[(iii)] If $\Delta \cap \langle F\rangle= \langle\{a,b\}\rangle$ then
$\Gamma$ is Buchsbaum and $h(\Gamma)=h(\Delta)+(0,1,0,0)$.
\end{itemize}
\end{lemma}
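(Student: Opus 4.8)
The plan is to split the verification into two independent parts: the computation of the $h$-vector, and the proof that $\Gamma$ is Buchsbaum. For the first part, I would record which faces are newly added to $\Delta$ in each case. In case (i) the only face of $\langle F\rangle$ not already in $\Delta$ is the facet $F$ itself, so $(f_0,f_1,f_2)$ increases by $(0,0,1)$; in case (ii) the facet $F$ and the edge $\{b,c\}$ are new, giving an increase of $(0,1,1)$; in case (iii) the facet $F$, the edges $\{a,c\},\{b,c\}$, and the vertex $c$ are new, giving an increase of $(1,2,1)$. Substituting these increments into the relations $h_1=f_0-3$, $h_2=f_1-2f_0+3$, $h_3=f_2-f_1+f_0-1$ recorded before the lemma yields exactly the stated changes $(0,0,0,1)$, $(0,0,1,0)$, and $(0,1,0,0)$. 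This is a routine computation.

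For the second part, I would first check that $\Gamma$ is pure of dimension $2$: in every case all newly added faces are contained in the facet $F$, so the facets of $\Gamma$ are the facets of $\Delta$ together with $F$, and purity is preserved. By the characterization recalled before the lemma---that a $2$-dimensional pure complex is Buchsbaum exactly when each vertex link is a connected graph---it then suffices to show that $\lk_\Gamma(v)$ is connected for every vertex $v$.

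The key step is the vertex-link analysis. For $v \notin F$ no new face contains $v$, so $\lk_\Gamma(v)=\lk_\Delta(v)$ is connected by the hypothesis on $\Delta$. For $v \in F$, I would describe exactly which edges and vertices the new faces contribute to $\lk_\Delta(v)$. In each case the effect is either to add an edge joining two vertices already present in $\lk_\Delta(v)$ (which preserves connectedness), or to add a new vertex together with an edge attaching it to a vertex already present (which also preserves connectedness). The only genuinely new vertex is $c$ in case (iii); here $\lk_\Gamma(c)=\langle\{a,b\}\rangle$ is a single edge, hence connected. Since each $\lk_\Delta(v)$ is connected to begin with, these modifications leave every link connected, so $\Gamma$ is Buchsbaum.

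The main obstacle is simply the careful bookkeeping in the link analysis---tracking which of $b,c$ are already vertices of $\lk_\Delta(v)$ in cases (ii) and (iii), and checking that the newly added edge always joins the new vertex to the existing connected graph. No deeper input is required: connectedness of a graph is preserved under adding an edge between existing vertices or attaching a pendant vertex, and the remaining content is the $f$-vector computation above.
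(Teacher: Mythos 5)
Your proof is correct and takes precisely the route the paper intends: the paper states Lemma \ref{2-5} without proof, having remarked just before Lemma \ref{2-4} that such statements follow easily from the observation that a pure $2$-dimensional complex is Buchsbaum if and only if every vertex link is a connected graph. Your face-count increments $(0,0,1)$, $(0,1,1)$, $(1,2,1)$ for $(f_0,f_1,f_2)$ and your link analysis (each new face either adds an edge between existing vertices of a connected link, attaches a pendant vertex, or creates the single-edge link $\langle\{a,b\}\rangle$ at $c$ in case (iii)) constitute exactly the routine verification the paper omits.
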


\section{Proof of Proposition \ref{2-2}}

In this section, we prove Proposition \ref{2-2}.
Let $h=(1,h_1,h_2,-w) \in \ZZ^4$ be the vector satisfying $w>0$ and $3w \leq h_2 \leq {h_1 +1 \choose 2}$.

Let $x$ be the smallest integer $k$ such that $3w \leq {k +1 \choose 2}$ and $y=\min\{h_2,{x+1 \choose 2}\}$.
We write
$$h=(1,x,y,-w) +(0,\gamma,\delta,0).$$
Then the vector $(1,x,y,-w)$ again satisfies the conditions in Proposition \ref{2-2} (that is, $3w \leq y \leq {x+1 \choose 2}$).
Also, if $\delta >0$ then $y={x+1 \choose 2}$.
The next lemma shows that, to prove Proposition \ref{2-2},
it is enough to consider the vector $(1,x,y,-w)$.

\begin{lemma}
\label{3-1}
If there exists a $2$-dimensional connected Buchsbaum complex $\Delta$
such that $h(\Delta)=(1,x,y,-w)$ then there exists a $2$-dimensional connected Buchsbaum complex $\Gamma$ such that $h(\Gamma)=h$.
\end{lemma}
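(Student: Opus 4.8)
The plan is to build $\Gamma$ from $\Delta$ by repeatedly gluing on $2$-faces, using the three local moves catalogued in Lemma \ref{2-5}. Writing $\gamma=h_1-x$ and $\delta=h_2-y$, both are nonnegative: since $x$ is the least $k$ with $3w\leq{k+1\choose 2}$ and $3w\leq h_2\leq{h_1+1\choose 2}$ forces $x\leq h_1$, we get $\gamma\geq 0$, while $y\leq h_2$ gives $\delta\geq 0$. Because move (iii) changes the $h$-vector by $(0,1,0,0)$ and move (ii) by $(0,0,1,0)$, applying move (iii) exactly $\gamma$ times and then move (ii) exactly $\delta$ times transforms $h(\Delta)=(1,x,y,-w)$ into $(1,x+\gamma,y+\delta,-w)=h$, while Lemma \ref{2-5} keeps the complex Buchsbaum at every stage. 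So the whole task reduces to checking that each move can actually be performed and that connectedness survives.

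First I would carry out the $\gamma$ applications of move (iii). At each step I pick any edge $\{a,b\}$ of the current complex and attach the triangle $\{a,b,c\}$ along a brand-new vertex $c$; since $c$ is not yet present, the intersection of $\langle\{a,b,c\}\rangle$ with the current complex is exactly $\langle\{a,b\}\rangle$, so the hypothesis of Lemma \ref{2-5}(iii) holds, and gluing a triangle to a connected complex along an existing edge keeps it connected. This produces a connected Buchsbaum complex $\Delta'$ with $h(\Delta')=(1,h_1,y,-w)$.

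The substantive step is the $\delta$ applications of move (ii), and this is where the main obstacle lies: I must repeatedly locate a \emph{cherry}, i.e.\ a pair of edges $\{a,b\},\{a,c\}$ with $\{b,c\}$ a non-edge, so that the intersection condition $\Delta'\cap\langle\{a,b,c\}\rangle=\langle\{a,b\},\{a,c\}\rangle$ of Lemma \ref{2-5}(ii) is met. Here the bookkeeping of $\delta$ is crucial. If $\delta>0$ then, as noted before the lemma, $y={x+1\choose 2}$, and $h_2=y+\delta\leq{h_1+1\choose 2}$ then forces $x<h_1$; so after the first step $\Delta'$ has $f_0=h_1+3$ vertices and $h_2$-value $y={x+1\choose 2}$ strictly below the maximal value ${h_1+1\choose 2}$. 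Since $h_2=f_1-2f_0+3$ is an increasing function of the number of edges, the equality $h_2={h_1+1\choose 2}$ holds exactly when the $1$-skeleton is complete; hence at each of the first $\delta$ stages the current $h_2$-value $y+j$ with $0\leq j<\delta$ satisfies $y+j<{h_1+1\choose 2}$, so the connected $1$-skeleton is not complete.

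It then remains to see that a connected non-complete graph always contains a cherry, which is the only genuinely new point. Choosing two non-adjacent vertices $b,c$ at minimum distance, that distance must be $2$ (a longer shortest path would contain a closer non-adjacent pair), so $b,c$ have a common neighbor $a$, giving edges $\{a,b\},\{a,c\}$ with $\{b,c\}$ a non-edge. Applying Lemma \ref{2-5}(ii) to this cherry adds only the edge $\{b,c\}$ and the $2$-face $\{a,b,c\}$, raising $h_2$ by $1$ while leaving $h_1$ and $h_3$ fixed, and adding an edge to a connected graph preserves connectivity. Iterating $\delta$ times (the cherry being available at every intermediate stage by the count above) and invoking Lemma \ref{2-4}(i) to assemble the successive unions, I obtain the desired connected Buchsbaum complex $\Gamma$ with $h(\Gamma)=(1,h_1,h_2,-w)=h$.
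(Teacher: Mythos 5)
Your proof is correct and follows essentially the same two-phase strategy as the paper: $\gamma$ applications of Lemma \ref{2-5}(iii) along brand-new vertices, followed by $\delta$ applications of Lemma \ref{2-5}(ii). The only divergence is in how the $\delta$ cherries are produced — the paper names them explicitly (all coned from the vertex $1$ over $\delta$ distinct non-edges involving the new vertices, after checking $\delta\leq|E|$), whereas you find one greedily at each stage from the observation that a connected, non-complete $1$-skeleton contains two non-adjacent vertices at distance two — and both arguments are sound.
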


\begin{proof}
We may assume that $\Delta$ is a simplicial complex on $[x+3]$ such that
$\{1,2\} \in \Delta$.
For $j=0,1,\dots,\gamma$,
let
$$\Delta_j=\Delta \cup \big\langle 
\big\{ \{1, 2, x+3+k\}: k=1,2,\dots,j \big\} \big\rangle,$$
where $\Delta_0=\Delta$.
Since $\Delta_{j-1} \cap \langle \{ 1, 2, {x+3+j}\}\rangle = \langle \{1,2\} \rangle$,
Lemma \ref{2-5}(iii) says that $\Delta_\gamma$ is a connected Buchsbaum complex with $h(\Delta_\gamma)=(1,x+\gamma,y,-w)$.

If $\delta=0$ then $\Delta_\gamma$ satisfies the desired conditions.
Suppose $\delta>0$.
Then $y={x+1 \choose 2}$. This means that $\Delta$ contains all $1$-dimensional simplexes $\{i,j\} \subset [x+3]$.
Let
$$E= \big\{ \{i,j\} \subset \{3,4,\dots,x+\gamma+3\}:
\{i,j\} \not \subset [x+3],\ i \ne j\big\}.$$
Then $E$ is the set of $1$-dimensional non-faces of $\Delta_\gamma$.
Also,
$$\delta=h_2 -y \leq {x+\gamma +1 \choose 2} -{x+1 \choose 2} =|E|.$$
Choose distinct elements $\{i_1,j_1\},\{i_2,j_2\},\dots,\{i_\delta,j_\delta\} \in E$.
Let
$$\Gamma_\ell=\Delta_\gamma \cup \big\langle  \big\{\{1,i_k,j_k\}:k=1,2,\dots,\ell \big\}\big\rangle $$
for $\ell=0,1,\dots,\delta$, where $\Gamma_0=\Delta_\gamma$.
Since $\Gamma_{\ell-1} \cap \langle \{1, i_\ell, j_\ell\}\rangle=\langle\{1,i_\ell\}, \{1,j_\ell\}\rangle$,
it follows from Lemma \ref{2-5}(ii) that $\Gamma_\delta$ is a connected Buchsbaum complex with $h(\Gamma_\delta)=(1,x+\gamma,y+\delta,-w)=h$.
\end{proof}

Let $n=x+3$ and $M=\max\{k:3k \leq {x+1 \choose 2}\}$.
Write $n=3p+q$ where $p \in \ZZ$ and $q \in \{0,\pm 1\}$.
Then
\begin{eqnarray*}
M= \left\{
\begin{array}{lll}
\frac 1 3 {n-2 \choose 2} = \frac 1 2 (p-1)(3p-4),&
\mbox{ if }n=3p-1,\smallskip\\
\frac 1 3 {n-2 \choose 2} = \frac 1 2 (p-1)(3p-2),&
\mbox{ if }n=3p,\smallskip\\
\frac 1 3 \big\{ {n-2 \choose 2} -1\big\} = \frac 1 2 (p-1)3p,&
\mbox{ if }n=3p+1.
\end{array}
\right.
\end{eqnarray*}
Let $b$, $c$ and $\alpha$ be non-negative integers satisfying
$$(1,x,y,-w)=\big(1,n-3,3(M-b)+\alpha,-(M-b)+c\big)$$
and $\alpha\in \{0,1,2\}$ ($\alpha$ is the remainder of $y/3$).
Since ${x \choose 2} <3w \leq {x+1 \choose 2}$ by the choice of $x$,
the following conditions hold:
\begin{itemize}
\item $n \geq 5$ and $p \geq 2$;
\item $0 \leq b+c \leq p-2$.
\end{itemize}
Note that $n \geq 5$ holds since $3w \leq {x+1 \choose 2}$
and $w$ is positive.
Also, $b+c \leq p-2$ holds since if $b+c \geq p-1$ then
$3 w =3(M-b-c) \leq {x \choose 2}$.

We will construct a Buchsbaum complex $\Delta$ on $[n]$ with $h(\Delta)=(1,x,y,-w)$.
The construction depends on the remainder of $n/3$,
and will be given in subsections 3.1, 3.2 and 3.3.
We explain the procedure of the construction.
First, we construct a connected Buchsbaum complex $\Gamma$ with the $h$-vector
$(1,n-3,3(M-b),-(M-b))$.
Second, we construct a Buchsbaum complex $\Delta$ with the $h$-vector
$(1,n-3,3(M-b),-(M-b)+c)$ by adding certain $2$-dimensional simplexes to $\Gamma$ and by applying Lemma \ref{2-5}(i).
Finally, we construct a Buchsbaum complex with the desired $h$-vector
by using Lemma \ref{2-5}(ii).
\bigskip

\noindent
\textbf{Remarks and Notations of subsections 3.1, 3.2 and 3.3.}
For an integer $i \in \ZZ$ we write $\overline i$ for the integer in $[n]$ such that $\overline i \equiv i$ mod $n$.
The constructions given in subsections 3.1, 3.2 and 3.3 are different,
however, the proofs are similar.
Thus we write details of proofs in subsection 3.1
and sketch proofs in subsections 3.2 and 3.3.

\subsection{Construction when $n=3p-1$}
\ \\
Let
$$\Sigma = \big\langle \big\{ \{\ob i, \ob {1+i}, \ob{2+i}\}:i=1,2,\dots,n \big\} \big\rangle $$
and for $i=1,2,\dots,n$ and $j=1,2,\dots,p-2$, let
$$\Delta(i,j)= \big\langle \{ \ob i, \ob{1+i},\ob {2+i+3j}\}, \{\ob{1+i+3j}, \ob{2+i+3j},\ob{1+i}\}\big\rangle .$$
Let
$$\LL =\big\{ \Delta(i,j): i=1,2,\dots,n \mbox{ and } j=1,2,\dots,p-2\big\}$$
and
$$\hat \Delta = \Sigma \cup \left( \bigcup_{\Delta(i,j) \in \LL} \Delta(i,j) \right).$$
Then it is easy to see that
\begin{itemize}
\item $\Delta(i,j)=\Delta(1+i+3j,p-1-j)$.\smallskip
\item If $\Delta(i,j)\ne \Delta(i',j')$ then
$\Delta(i,j)$ and $\Delta(i',j')$ have no common facets. \smallskip
\item $\hat \Delta= \langle \{ \{\ob i, \ob{1+i},\ob{2+i+3j}\}:i=1,2,\dots,n \mbox{ and }j=0,1,\dots,p-2\} \rangle.$
\end{itemize}

\begin{example}
\label{exa1}
Consider the case when $n=8$.
Then $p=3$ and
$$\Sigma=\big\langle \{1,2,3\},\{2,3,4\},\{3,4,5\},\{4,5,6\},\{5,6,7\},\{6,7,8\},\{7,8,1\},\{8,1,2\}\big\rangle.$$
Also,
\begin{eqnarray*}
\begin{array}{ll}
\Delta(1,1)=\Delta(5,1)=\langle \{1,2,6\},\{5,6,2\} \rangle,\
&\Delta(2,1)=\Delta(6,1)=\langle \{2,3,7\},\{6,7,3\} \rangle,\\
\Delta(3,1)=\Delta(7,1)=\langle \{3,4,8\},\{7,8,4\} \rangle,\
&\Delta(4,1)=\Delta(8,1)=\langle \{4,5,1\},\{8,1,5\} \rangle.
\end{array}
\end{eqnarray*}
\end{example}

\begin{lemma}
\label{3-2}
\ 
\begin{itemize}
\item[(i)] (Hanano) $\hat \Delta$ is Buchsbaum, link-acyclic and $h(\hat \Delta)=(1,n-3,3M,-M)$.
\item[(ii)] For any subset $\MM \subset \LL$,
$\Sigma \cup(\bigcup_{\Delta(i,j)\in\MM}\Delta(i,j))$ is Buchsbaum and link-acyclic.
\end{itemize}
\end{lemma}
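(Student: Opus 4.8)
The plan is to handle the two parts separately, using part~(i) to feed part~(ii). For part~(i), the fact that $\hat\Delta$ is Buchsbaum and link-acyclic is Hanano's construction, so I would cite \cite{Ha}; alternatively one checks directly that every vertex link of $\hat\Delta$ is a tree. Concretely, for a fixed vertex $v$ the triangles of $\hat\Delta$ containing $v$ fall into three families, according to whether $v$ plays the role of $\ob i$, of $\ob{1+i}$, or of $\ob{2+i+3j}$; the corresponding link edges form two stars (centred at $\ob{v+1}$ and at $\ob{v-1}$) together with $p-1$ disjoint short edges, and a short computation modulo $n$ shows these glue into a tree (connected, with $3(p-1)$ edges on $3(p-1)+1$ vertices). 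Granting link-acyclicity, the $h$-vector follows from bookkeeping: $f_0=n$ and $f_2=n(p-1)$ are read off from the facet description of $\hat\Delta$ (the $n(p-1)$ triples $\{\ob i,\ob{1+i},\ob{2+i+3j}\}$ being pairwise distinct for $n\ge 5$), Lemma~\ref{2-3} gives $h_3=-\tfrac13 h_2$, and substituting $f_0,f_2$ into the displayed $f$--$h$ relations for $d=3$ pins down $h_2=3M$ and $h_3=-M$.

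For part~(ii) the strategy is to build the complex one double-triangle at a time starting from the base $\Sigma$, and to invoke Lemma~\ref{2-4}. First I would check that $\Sigma$ itself is Buchsbaum: for each vertex $v$ exactly the three triangles indexed by $i\in\{v-2,v-1,v\}$ contain $v$, so $\lk_\Sigma(v)$ is the path through $\ob{v-2},\ob{v-1},\ob{v+1},\ob{v+2}$, which is connected. The main step is then to show that $\Sigma\cup\Delta(i,j)$ is Buchsbaum for every $(i,j)$. Only the four vertices $\ob i,\ob{1+i},\ob{1+i+3j},\ob{2+i+3j}$ have their links altered, and for each I would verify that the two new link edges coming from the facets $T_1=\{\ob i,\ob{1+i},\ob{2+i+3j}\}$ and $T_2=\{\ob{1+i},\ob{1+i+3j},\ob{2+i+3j}\}$ attach to a vertex already present in the $\Sigma$-link, so that connectedness is preserved.

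With this in hand, Lemma~\ref{2-4}(i) applied with the Buchsbaum base $\Sigma$ and the complexes $\{\Delta(i,j):\Delta(i,j)\in\MM\}$ shows that $\Sigma\cup(\bigcup_{\Delta(i,j)\in\MM}\Delta(i,j))$ is Buchsbaum for every $\MM\subset\LL$. Since any such union is a $2$-dimensional Buchsbaum subcomplex of $\hat\Delta$, which is link-acyclic by part~(i), Lemma~\ref{2-4}(ii) immediately upgrades it to link-acyclic, completing~(ii).

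The step I expect to be the real obstacle is the connectedness check for $\Sigma\cup\Delta(i,j)$, and within it the vertex $\ob{2+i+3j}$ is the delicate one: its two new link edges $\{\ob i,\ob{1+i}\}$ and $\{\ob{1+i},\ob{1+i+3j}\}$ involve vertices lying far from $\ob{2+i+3j}$ along the cycle, so a priori they could form a component disjoint from the old path-link. The point that rescues connectedness is the congruence $\ob{1+i+3j}=\ob{(2+i+3j)-1}$, which places $\ob{1+i+3j}$ on the existing $\Sigma$-link of $\ob{2+i+3j}$; keeping this modular arithmetic straight is where the care is needed. The remaining three vertices merely attach a new leaf to an obviously present consecutive vertex and are routine.
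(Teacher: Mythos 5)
Your proposal is correct and follows the paper's argument: the Buchsbaum property of both parts is obtained exactly as in the paper, by checking that $\Sigma$ and each $\Sigma\cup\Delta(i,j)$ have connected vertex links and invoking Lemma \ref{2-4}(i), and the link-acyclicity of both parts is reduced via Lemma \ref{2-4}(ii) to that of $\hat\Delta$. The only (harmless) divergence is in how $\hat\Delta$ itself is handled: the paper computes $f_1(\hat\Delta)={n \choose 2}$ and $f_2(\hat\Delta)=n(p-1)$ to obtain $h(\hat\Delta)=(1,n-3,3M,-M)$ and then deduces link-acyclicity from Lemma \ref{2-3}, whereas you verify directly that each vertex link is a tree (or cite Hanano) and then back-solve for $f_1$ and the $h$-vector using Lemma \ref{2-3}; both routes work.
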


\begin{proof}
The simplicial complex $\Sigma$ is Buchsbaum since
its every vertex link is connected.
Also, for any $\Delta(i,j) \in \LL$,
one can easily see that every vertex link of $\Sigma \cup \Delta(i,j)$
is connected.
Then the Buchsbaum property of (i) and (ii) follows from
Lemma \ref{2-4}(i).

To prove the link-acyclic property of (i) and (ii),
what we must prove is that $\hat \Delta$ is link-acyclic by Lemma \ref{2-4}(ii).
It is enough to prove $h(\hat \Delta)=(1,n-3,3M,-M)$,
equivalently $f(\hat \Delta)=(1,n,{n \choose 2},2M+n-2)$.
This fact was shown in \cite{Ha}.
Thus we sketch the proof.
It is clear that $f_2(\hat \Delta)=n(p-1)=2M+n-2$.
On the other hand, $f_1(\hat \Delta)={n \choose 2}$ holds since $\hat \Delta$ contains
all $1$-dimensional faces $\{i,j\} \subset [n]$.
\end{proof}

Recall that what we want to do is to construct a connected Buchsbaum complex with the $h$-vector
$(1,n-3,3(M-b)+\alpha,-(M-b)+c)$, where $\alpha \in \{0,1,2\}$ and $b+c \leq p-2$.
Let
$$\MM = \LL \setminus \{\Delta(1,1),\Delta(1,2),\dots,\Delta(1,b)\}$$
and
$$\Gamma = \Sigma \cup \left ( \bigcup_{\Delta(i,j) \in \MM} \Delta(i,j) \right).$$
For $j=1,2,\dots,p-2$, let
$$G_j=\{1,2+3j,3+3j\}.$$
Note that $G_j \not \in \hat \Delta$.
Define
$$
\Delta_k=\Gamma \cup \langle G_{b+1} \rangle \cup \langle G_{b+2} \rangle
\cup \cdots \cup \langle G_{b+k} \rangle
$$
for $k=0,1,\dots,c$, where $\Delta_0=\Gamma$.

\begin{lemma}
\label{3-3}
For $k=0,1,\dots,c$,
the simplicial complex $\Delta_k$ is connected, Buchsbaum and
$h(\Delta_k)=(1,n-3,3(M-b),-(M-b)+k)$.
\end{lemma}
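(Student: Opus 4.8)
The plan is to argue by induction on $k$, separating the base case $k=0$ (where $\Delta_0=\Gamma$) from the inductive step, in which we pass from $\Delta_{k-1}$ to $\Delta_k=\Delta_{k-1}\cup\langle G_{b+k}\rangle$ by a single application of Lemma \ref{2-5}(i).

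For the base case, the Buchsbaum and link-acyclic properties of $\Gamma$ are free from Lemma \ref{3-2}(ii), since $\MM\subset\LL$. Connectivity is immediate as well: $\Sigma\subseteq\Gamma$, and $\Sigma$ is connected with facets covering every vertex of $[n]$, so $\Gamma$ is connected. It then remains to compute $h(\Gamma)$. Since all $n$ vertices survive, $h_1(\Gamma)=f_0-3=n-3$. Because $\Gamma$ is link-acyclic, Lemma \ref{2-3} gives $h_3(\Gamma)=-\frac13 h_2(\Gamma)$, so the entire $h$-vector is pinned down once $h_2(\Gamma)$, equivalently $f_1(\Gamma)$, is known. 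As $\hat\Delta$ contains all ${n\choose 2}$ edges, $f_1(\Gamma)$ equals ${n\choose 2}$ minus the number of edges lying in no facet of $\Gamma$, i.e.\ the edges "private" to the removed subcomplexes $\Delta(1,1),\dots,\Delta(1,b)$. The crux of the base case is to show that each $\Delta(1,j)$ has exactly three such private edges (its two $\Sigma$-edges of label-difference $1$ always survive, while its remaining three edges lie in no other facet), and that these triples are disjoint for distinct $j$. This gives $f_1(\Gamma)={n\choose 2}-3b$, hence $h_2(\Gamma)=3(M-b)$ and $h_3(\Gamma)=-(M-b)$.

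For the inductive step I assume the claim for $\Delta_{k-1}$. I would first check that the three vertices of $G_{b+k}=\{1,\ob{2+3(b+k)},\ob{3+3(b+k)}\}$ are distinct and that all three of its edges already lie in $\Delta_{k-1}$: the edge $\{\ob{2+3(b+k)},\ob{3+3(b+k)}\}$ lies in $\Sigma$, while the other two edges lie in facets of subcomplexes $\Delta(i,j)\in\MM$ that were \emph{not} removed — here one uses $b+k>b$, so that $\Delta(1,b+k)$ and the relevant neighbouring $\Delta(i,j)$ persist in $\Gamma$. Since $G_{b+k}\notin\hat\Delta\supseteq\Gamma$ and $G_{b+k}$ differs from each previously added $G_{b+1},\dots,G_{b+k-1}$, the $2$-face $G_{b+k}$ itself is not a face of $\Delta_{k-1}$. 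Therefore $\Delta_{k-1}\cap\langle G_{b+k}\rangle$ is exactly the boundary of $G_{b+k}$, so Lemma \ref{2-5}(i) applies and yields that $\Delta_k$ is Buchsbaum with $h(\Delta_k)=h(\Delta_{k-1})+(0,0,0,1)$. Connectivity is preserved because all vertices of the new facet already lie in the connected complex $\Delta_{k-1}$. Combining with the base case gives $h(\Delta_k)=(1,n-3,3(M-b),-(M-b)+k)$ for all $k$.

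The main obstacle is the modular bookkeeping underlying the base case: correctly determining, for each removed $\Delta(1,j)$, which of its five edges become non-faces of $\Gamma$. This is delicate because label differences must be read modulo $n=3p-1$, so "wrap-around" coincidences are possible, and one must rule out that a would-be private edge is secretly shared with some surviving triangle $\{\ob{i'},\ob{1+i'},\ob{2+i'+3j'}\}$ or with $\Sigma$. The same care is needed, in a lighter form, to confirm in the inductive step that none of the three edges of $G_{b+k}$ was among the edges deleted when forming $\Gamma$.
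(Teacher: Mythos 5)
Your overall structure --- a base case for $\Gamma=\Delta_0$ followed by induction, adding one $G_{b+k}$ at a time via Lemma \ref{2-5}(i) --- is the same as the paper's, and your inductive step matches the paper's almost verbatim (the paper exhibits the three boundary edges of $G_{b+k}$ inside $\Delta(1,b+k)$ and $\Delta(n,b+k)$, both of which survive in $\Gamma$; your ``relevant neighbouring $\Delta(i,j)$'' is exactly $\Delta(n,b+k)$). The real divergence is in the base case, and there your plan shoulders a burden the paper avoids entirely. You propose to compute $f_1(\Gamma)$ directly, by showing that each removed $\Delta(1,j)$ contributes exactly three ``private'' edges and that these triples are pairwise disjoint; you correctly flag this modular bookkeeping as the crux, but you do not carry it out, so as written the base case is incomplete --- and this is genuinely the hard part, since one must exclude coincidences of each candidate private edge with every surviving facet $\{\ob{i'},\ob{1+i'},\ob{2+i'+3j'}\}$. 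The paper sidesteps this by counting \emph{facets} instead of edges: distinct $\Delta(i,j)$'s share no facets (one of the stated bullet points), so $f_2(\Gamma)=f_2(\hat\Delta)-2b$ is immediate; link-acyclicity then gives the linear relation $3h_3(\Gamma)+h_2(\Gamma)=0$, and combining it with $f_2=h_0+h_1+h_2+h_3$, $h_0=1$ and $h_1=n-3$ pins down $h_2(\Gamma)=3(M-b)$ and $h_3(\Gamma)=-(M-b)$ simultaneously, with no edge count at all. (Indeed, the paper later uses this very trick in Lemma \ref{3-4} to \emph{deduce} your ``three private edges'' statement rather than verify it by hand.) I would replace your edge count by this facet count; otherwise you must actually supply the case analysis you deferred.
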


\begin{proof}
The connectedness is obvious.
By Lemma \ref{3-2}, $\Gamma$ is Buchsbaum and link-acyclic.
In particular, since $f_2(\Gamma)=f_2(\hat \Delta)-2b$,
the equation $f_2=h_0+h_1+h_2+h_3$ and the link-acyclic property imply
$$h(\Gamma)=h(\hat \Delta)-(0,0,3b,-b)=\big(1,n-3,3(M-b),-(M-b)\big).$$
Then, to complete the proof,
by Lemma \ref{2-5}(i) it is enough to prove that
\begin{eqnarray*}
\
\Delta_{k-1} \cap \langle G_{b+k}\rangle = \big\langle \{1,2+3(b+k)\}, \{1,3+3(b+k)\},\{2+3(b+k),3+3(b+k)\} \big\rangle 
\end{eqnarray*}
for $k=1,2,\dots,c$.
It is clear that $G_{b+k} \not \in \Delta_{k-1}$.
Also, $\{1,3+3(b+k)\}, \{2+3(b+k),3+3(b+k)\} \in \Delta(1,b+k) \subset \Delta_{k-1}$.
Finally,
$\{1,2+3(b+k)\} \in \Delta(n,b+k)$ and $\Delta(n,b+k) \subset \Gamma \subset \Delta_{k-1}$
by the construction of $\Gamma$.
\end{proof}

Let $\Delta=\Delta_c$.
If $\alpha=0$ then $\Delta$ has the desired $h$-vector.
We consider the case $\alpha\in\{1,2\}$.
Then $b>0$ since $3(M-b) + \alpha \leq {n-2 \choose 2}=3M$.
The next lemma and Lemma \ref{2-5}(ii) guarantee the existence of a $2$-dimensional
connected Buchsbaum complex with the $h$-vector
$(1,x,y,-w)=(1,n-3,3(M-b)+\alpha,-(M-b)+c)$.

\begin{lemma}
\label{3-4}\ 
\begin{itemize}
\item[(i)] $\Delta \cap \langle G_b\rangle = \langle \{1,2+3b\},\{2+3b,3+3b\}\rangle$.
\item[(ii)] $(\Delta\cup \langle G_b\rangle)\cap \langle \{1,2,3+3b\}\rangle=\langle \{1,2\},\{1,3+3b\}\rangle$.
\end{itemize}
\end{lemma}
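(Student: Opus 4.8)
The plan is to exploit that both $\Delta$ and $\Delta\cup\langle G_b\rangle$ are $2$-dimensional and that the two sets under scrutiny, $G_b=\{1,2+3b,3+3b\}$ and $\{1,2,3+3b\}$, are themselves potential $2$-faces. Hence each assertion reduces to deciding the membership of a single triangle together with its three edges; the three vertices are automatic since $\Delta\supseteq\Sigma$ contains all of $[n]$. So I would split both parts into a short list of ``edge present'', ``edge absent'', and ``triangle absent'' claims and check them one at a time against the explicit facet list of $\Delta=\Delta_c$: the facets of $\Sigma$, the two facets of each surviving $\Delta(i,j)\in\MM$, and the added triangles $G_{b+1},\dots,G_{b+c}$ (together with $G_b$ for part (ii)).

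The positive and triangle-absence claims I expect to be immediate. The edges $\{2+3b,3+3b\}$ and $\{1,2\}$ join consecutive vertices, so they lie in $\Sigma\subseteq\Delta$. The edge $\{1,2+3b\}$ is an edge of a facet of $\Delta(n,b)$, and a short facet comparison of $\{\ob n,1,\ob{2+3b}\}$ with the facets of the deleted $\Delta(1,1),\dots,\Delta(1,b)$ shows $\Delta(n,b)\in\MM$, so the edge is present. For the triangles: $G_b\notin\hat\Delta\supseteq\Gamma$ and $G_b$ differs from every added $G_{b+k}$, giving $G_b\notin\Delta$; and $\{1,2,3+3b\}$ is precisely the facet $\{\ob1,\ob2,\ob{3+3b}\}$ of the deleted $\Delta(1,b)$, so by the stated property that distinct $\Delta(i,j)$ share no facet it is a facet of no surviving piece, nor of $\Sigma$, nor of any $G_{b+k}$; hence it lies in neither $\Delta$ nor $\Delta\cup\langle G_b\rangle$. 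For part (ii) I would also record, using part (i), that forming $\Delta\cup\langle G_b\rangle$ adjoins only the face $G_b$ and the single edge $\{1,3+3b\}$, which settles $\{1,3+3b\}\in\Delta\cup\langle G_b\rangle$.

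The genuine content, and the step I expect to be the main obstacle, is the two negative edge statements: $\{1,3+3b\}\notin\Delta$ in (i) and $\{2,3+3b\}\notin\Delta\cup\langle G_b\rangle$ in (ii). I would prove each by enumerating all indices $(i,j)$ with $i\in[n]$ and $1\le j\le p-2$ for which the target edge appears among the five distinct edges of $\Delta(i,j)$, writing down the matching condition for each edge-type and solving the resulting congruences modulo $n=3p-1$. The goal is to show the edge occurs only in $\Delta(1,b)$ — equivalently, under the pairing $\Delta(1,b)=\Delta(2+3b,p-1-b)$ — which has been deleted; here the hypotheses $b\ge1$ and $b\le p-2$ are exactly what eliminate the spurious congruence solutions and place the genuine one in the deleted range. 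Finally I would note that the added triangles contribute none of these edges (for $k\ge1$ the edge $\{1,3+3(b+k)\}$ differs from $\{1,3+3b\}$, and $\langle G_b\rangle$ does not contain $\{2,3+3b\}$). The congruence bookkeeping is the error-prone part, so I would organize it as a case analysis indexed by which of the five edge-types of $\Delta(i,j)$ the target edge is matched against, being careful to track the two orientations in which a circular edge can be realized.
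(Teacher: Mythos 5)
Your proposal reaches the same positive claims by the same means as the paper (the two $\Sigma$-edges, $\{1,2+3b\}\in\Delta(n,b)\subset\Gamma$, and $\{1,3+3b\}\in\langle G_b\rangle$ for part (ii)), but for the crux --- showing $\{1,3+3b\}\notin\Delta$ and $\{2,3+3b\}\notin\Delta\cup\langle G_b\rangle$ --- you take a genuinely different route. The paper avoids all congruence bookkeeping with a counting argument: by Lemma \ref{3-2}(ii) both $\Gamma$ and $\Gamma\cup\Delta(1,b)$ are link-acyclic Buchsbaum, and since adjoining $\Delta(1,b)$ adds exactly two facets, link-acyclicity forces $h(\Gamma\cup\Delta(1,b))=h(\Gamma)+(0,0,3,-1)$, hence $f_1$ jumps by exactly $3$; as $\Delta(1,b)$ has five edges of which two lie in $\Sigma$, the remaining three ($\{1,3+3b\}$, $\{2,2+3b\}$, $\{2,3+3b\}$) must all be absent from $\Gamma$, and since $h_2(\Delta)=h_2(\Gamma)$ the edge sets of $\Delta$ and $\Gamma$ coincide. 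This buys a uniform two-line argument that kills all three edges at once and transfers verbatim to the other two residue classes of $n$. Your direct enumeration over the five edge-types of $\Delta(i,j)$ does work, but be warned that it is more delicate than "the hypotheses eliminate the spurious solutions" suggests: for $\{2,3+3b\}$ the congruence $n\mid 2+6b$ has a genuine solution when $2b=p-1$, producing an apparently different pair $(i,j)=(2+3b,b)$; this is rescued only because the identification $\Delta(i,j)=\Delta(1+i+3j,p-1-j)$ collapses it back onto the deleted $\Delta(1,b)$. Your framework accommodates this since you invoke that pairing, but it is exactly the kind of coincidence the paper's counting argument renders invisible. (Your separate verifications that the triangles $G_b$ and $\{1,2,3+3b\}$ are absent are correct but redundant once the corresponding edges are shown absent.)
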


\begin{proof}
First, we claim that $\{1,3+3b\},\{2,2+3b\},\{2,3+3b\} \not \in\Delta$.
By Lemma \ref{3-2}, both $\Gamma$ and $\Gamma \cup \Delta(1,b)$ are Buchsbaum and link-acyclic.
Since $\Delta(1,b) \not \subset \Gamma$, $f_2(\Gamma\cup \Delta(1,b))=f_2(\Gamma)+2$.
Then the link-acyclic property shows $h(\Gamma\cup \Delta(1,b))=h(\Gamma)+(0,0,3,-1)$.
This fact implies $f_1(\Gamma\cup \Delta(1,b))=f_1(\Gamma)+3$.
Thus $\Delta(1,b)$ contains three edges which are not in $\Gamma$.
Actually, $\Delta(1,b)$ has $5$ edges
$$\{1,2\},\{2+3b,3+3b\},\{1,3+3b\},\{2,2+3b\},\{2,3+3b\}.$$
Since the first two edges are contained in $\Sigma$,
the latter three edges are not contained in $\Gamma$.
Since $h_i(\Gamma)=h_i(\Delta)$ for $i \leq 2$,
$f_1(\Gamma)=f_1(\Delta)$.
Thus the set of edges in $\Gamma$ and that of $\Delta$ are same.
Hence $\{1,3+3b\},\{2,2+3b\},\{2,3+3b\} \not \in\Delta$ as desired.

Then (i) holds since $\{1,2+3b\} \in \Delta(n,b) \subset \Delta$,
$\{2+3b,3+3b\} \in \Sigma \subset \Delta$ and $\{1,3+3b\} \not \in \Delta$,
and (ii) holds since $\{1,2\} \in \Sigma \subset \Delta$,
$\{1,3+3b\} \in \langle G_b\rangle$ and $\{2,3+3b\} \not \in \Delta \cup \langle G_b\rangle$.
\end{proof}

\begin{example}
Again, consider the case when $n=8$ as in Example \ref{exa1}.
In this case, $M=5$.
We construct a $2$-dimensional Buchsbaum complex with the $h$-vector
$(1,n-3,3(M-1)+2,-(M-1))=(1,5,14,-4)$.

The simplicial complex $\Delta_0=\Gamma=\Sigma \cup \Delta(2,1) \cup \Delta(3,1)\cup \Delta(4,1)$ is Buchsbaum and $h(\Gamma)=(1,5,12,-4)$.
Now,
$G_1=\{1,5,6\}$ and
$\Delta_0 \cup \langle \{1,5,6\} \rangle$ has the $h$-vector
$(1,5,13,-4)$.
Finally, $\Delta_0 \cup \langle \{1,5,6\},\{1,2,6\} \rangle$
has the $h$-vector $(1,5,14,-4)$ as desired.
\end{example}

\subsection{Construction when $n=3p$}\ \\
Let
$$\Sigma = \big\langle \big\{ \{\ob i, \ob{i+p}, \ob{i+2p}\}: i=1,2,\dots,p \big\} \big\rangle $$
and, for $i=1,2,\dots,n$ and $j=1,2,\dots,p-1$, let
$$\Delta(i,j)=\big\langle \{\ob i, \ob{i+p},\ob{i+j+p}\},\{\ob{i+j+p},\ob{i+j+2p},\ob i\}\big\rangle .$$
Let
$$\LL=\big\{ \Delta(i,j):i=1,2,\dots,n \mbox{ and }j=1,2,\dots,p-1\big\}$$
and
$$\hat \Delta = \Sigma \cup \left( \bigcup_{\Delta(i,j) \in \LL} \Delta(i,j) \right).$$
Note that
$$\hat \Delta = \Sigma \cup \big\langle \big\{ \{\ob i, \ob{i+p}, \ob{i+j+p}\}: i=1,2,\dots,n \mbox{ and }j=1,2,\dots,p-1\big\} \big\rangle .$$
The next lemma can be proved in the same way as in Lemma \ref{3-2}.

\begin{lemma}
\label{3-5}
\ \begin{itemize}
\item[(i)] (Hanano) $\hat \Delta$ is Buchsbaum, link-acyclic and $h(\hat \Delta)=(1,n-3,3M,-M)$.
\item[(ii)] For any subset $\MM \subset \LL$,
$\Sigma \cup(\bigcup_{\Delta(i,j)\in\MM}\Delta(i,j))$ is Buchsbaum and link-acyclic.
\end{itemize}
\end{lemma}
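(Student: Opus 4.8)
The plan is to follow the proof of Lemma \ref{3-2} essentially verbatim, substituting the cyclic combinatorics appropriate to the case $n=3p$. The Buchsbaum assertions of (i) and (ii) will come first. Since each vertex $\ob i$ lies in exactly one facet of $\Sigma$, namely $\{\ob i,\ob{i+p},\ob{i+2p}\}$, the link $\lk_\Sigma(\ob i)$ is a single edge and hence connected, so $\Sigma$ is Buchsbaum. Next I would verify that for each $\Delta(i,j)\in\LL$ every vertex link of $\Sigma\cup\Delta(i,j)$ is connected: the two facets of $\Delta(i,j)$ share the edge $\{\ob i,\ob{i+j+p}\}$, and a direct inspection at each of the four vertices shows that the link-edges contributed by $\Delta(i,j)$ attach to the $\Sigma$-edge already present, so each link is a path. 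With these two facts, Lemma \ref{2-4}(i) gives at once that $\Sigma\cup(\bigcup_{\Delta(i,j)\in\MM}\Delta(i,j))$ is Buchsbaum for every $\MM\subset\LL$, proving the Buchsbaum parts of both (i) and (ii).

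For the link-acyclic assertions, Lemma \ref{2-4}(ii) reduces everything to showing that the largest complex $\hat\Delta$ is link-acyclic, and Lemma \ref{2-3} reduces this in turn to computing $h(\hat\Delta)=(1,n-3,3M,-M)$, equivalently $f(\hat\Delta)=(1,n,{n\choose 2},2M+n-2)$. The complex $\hat\Delta$ has vertex set $[n]$, giving $f_0=n$. For $f_1$, the complex is invariant under the shift $i\mapsto\ob{i+1}$, so it suffices to realize one edge of each circular difference $1,\dots,\lfloor n/2\rfloor$; this is achieved by the edges $\{\ob{i+p},\ob{i+j+p}\}$ (differences $1,\dots,p-1$), the $\Sigma$-edge $\{\ob i,\ob{i+p}\}$ (difference $p$), and $\{\ob i,\ob{i+j+p}\}$ (differences $p+1,\dots,2p-1$), whence $\hat\Delta$ contains all edges and $f_1={n\choose 2}$. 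For $f_2$, the key identity is the symmetry $\Delta(i,j)=\Delta(i+j+p,p-j)$, which interchanges the two facets of $\Delta(i,j)$ and, having no fixed points, pairs the $n(p-1)$ indices into $\frac12 n(p-1)$ distinct complexes; combined with the facts that distinct complexes share no facet and that no $\Delta(i,j)$-facet coincides with a $\Sigma$-facet, this yields $f_2(\hat\Delta)=p+2\cdot\frac12 n(p-1)=p(3p-2)=2M+n-2$. Since $h(\hat\Delta)=(1,n-3,3M,-M)$ satisfies $h_3=-\frac13 h_2$, Lemma \ref{2-3} shows $\hat\Delta$ is link-acyclic, and Lemma \ref{2-4}(ii) then propagates this to every subcomplex appearing in (ii).

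The step I expect to require the most care is the facet count, in particular verifying the pairing symmetry $\Delta(i,j)=\Delta(i+j+p,p-j)$ and the ``no shared facets'' claim, since these are precisely the cyclic bookkeeping facts that differ from the $n=3p-1$ case and underlie the value $f_2(\hat\Delta)=p(3p-2)$. The vertex-link connectivity checks and the all-edges claim, while involving a short case inspection, are routine once the shift-invariance is in hand.
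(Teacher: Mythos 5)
Your proposal is correct and follows exactly the route the paper intends: the paper proves this lemma by declaring it "the same as Lemma 3.2," and you carry out that adaptation faithfully (Buchsbaumness via Lemma 2.4(i) after checking links of $\Sigma\cup\Delta(i,j)$, link-acyclicity via Lemma 2.4(ii) reduced to the $f$-vector of $\hat\Delta$), with the case-specific counts $f_2=p+n(p-1)=p(3p-2)=2M+n-2$ and the pairing $\Delta(i,j)=\Delta(i+j+p,p-j)$ verified correctly.
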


Let
$\MM = \LL \setminus \{\Delta(1,1),\Delta(1,2),\dots,\Delta(1,b)\}$
and
$$\Gamma = \Sigma \cup \left ( \bigcup_{\Delta(i,j) \in \MM} \Delta(i,j) \right).$$
For $j=1,2,\dots,p-2$, let
$$G_j=\{1+p,1+j+p,1+j+2p\}.$$
Note that $G_j \not \in \hat \Delta$.
Define
$$
\Delta_k=\Gamma \cup \langle G_{b+1} \rangle \cup \langle G_{b+2} \rangle
\cup \cdots \cup \langle G_{b+k} \rangle
$$
for $k=0,1,\dots,c$, where $\Delta_0=\Gamma$.

\begin{lemma}
\label{3-6}
For $k=0,1,\dots,c$,
the simplicial complex $\Delta_k$ is connected, Buchsbaum and
$h(\Delta_k)=(1,n-3,3(M-b),-(M-b)+k)$.
\end{lemma}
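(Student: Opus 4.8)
The plan is to follow the proof of Lemma \ref{3-3} verbatim in structure, changing only the explicit facet- and edge-bookkeeping to fit the $n=3p$ construction. Connectedness is immediate, since every $\Delta_k$ contains $\Sigma$ and $\Sigma$ is connected. For the $h$-vector of $\Gamma$ I would first invoke Lemma \ref{3-5} to conclude that $\Gamma$ is Buchsbaum and link-acyclic. Exactly as in Lemma \ref{3-3}, the complexes $\Delta(1,1),\dots,\Delta(1,b)$ that are deleted in passing from $\hat\Delta$ to $\Gamma$ contribute $2b$ pairwise distinct triangles occurring in no other $\Delta(i',j')$ and in no facet of $\Sigma$, so $f_2(\Gamma)=f_2(\hat\Delta)-2b$. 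Substituting this into $f_2=h_0+h_1+h_2+h_3$ and using the link-acyclic identity $h_3=-\frac13 h_2$ from Lemma \ref{2-3} (the values $h_0=1$, $h_1=n-3$ being unchanged since no vertex is lost) forces $h(\Gamma)=h(\hat\Delta)-(0,0,3b,-b)=\big(1,n-3,3(M-b),-(M-b)\big)$.

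The substantive part is the inductive step through Lemma \ref{2-5}(i): for each $k=1,\dots,c$, abbreviating $j=b+k$, I must show that $\Delta_{k-1}\cap\langle G_{b+k}\rangle$ equals the full boundary $\big\langle \{1+p,1+j+p\},\{1+p,1+j+2p\},\{1+j+p,1+j+2p\}\big\rangle$ of the triangle $G_{b+k}$. The triangle $G_{b+k}$ itself does not lie in $\Delta_{k-1}$: it is not in $\hat\Delta$ by construction (hence not in $\Gamma$), and it differs from each previously attached triangle $G_{b+1},\dots,G_{b+k-1}$. Since all three vertices of $G_{b+k}$ already belong to $\Sigma\subset\Delta_{k-1}$, it therefore remains only to place each of the three edges of $G_{b+k}$ inside $\Gamma$.

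For the edge identification I would first record the five edges of a generic $\Delta(i,j)$, namely $\{\ob i,\ob{i+p}\}$, $\{\ob i,\ob{i+j+p}\}$, $\{\ob{i+p},\ob{i+j+p}\}$, $\{\ob{i+j+p},\ob{i+j+2p}\}$, $\{\ob i,\ob{i+j+2p}\}$, together with the index identity $\Delta(i,j)=\Delta(\ob{i+j+p},p-j)$. With $j=b+k$, the edges $\{1+p,1+j+p\}$ and $\{1+j+p,1+j+2p\}$ are the third and fourth edges of $\Delta(1,j)$, while the remaining edge $\{1+p,1+j+2p\}$ is the second edge of $\Delta(\ob{1+p},j)$. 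As $j=b+k>b$, neither $\Delta(1,b+k)$ nor $\Delta(\ob{1+p},j)$ is among the deleted complexes $\Delta(1,1),\dots,\Delta(1,b)$; for the latter this uses the identity, which shows $\Delta(\ob{1+p},j)=\Delta(1,j')$ would force $\ob{1+p}\in\{1,\ob{1+j'+p}\}$, impossible for $1\le j'\le b$. Hence both complexes survive in $\MM\subset\Gamma\subset\Delta_{k-1}$, all three edges are present, and Lemma \ref{2-5}(i) raises $h_3$ by one at each step, giving $h(\Delta_k)=\big(1,n-3,3(M-b),-(M-b)+k\big)$. The step I expect to demand the most care is precisely this last bookkeeping: confirming that the complex carrying each edge of $G_{b+k}$ is not discarded when forming $\Gamma$, which relies on the identity for $\Delta(i,j)$ and on the bound $b+c\le p-2$ that keeps every relevant index $b+k$ inside the admissible range $1\le b+k\le p-1$.
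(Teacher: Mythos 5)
Your treatment of the Buchsbaum property and of the $h$-vector is correct and is exactly the route the paper takes: the paper declares the proof of Lemma \ref{3-6} to be ``the same as that of Lemma \ref{3-3}'', the only case-specific input being precisely the edge bookkeeping you perform, namely $\{1+p,1+(b+k)+p\},\{1+(b+k)+p,1+(b+k)+2p\}\in\Delta(1,b+k)\subset\Gamma$ and $\{1+p,1+(b+k)+2p\}\in\Delta(1+p,b+k)\subset\Gamma$. Your verification via the identity $\Delta(i,j)=\Delta(\ob{i+j+p},p-j)$ that $\Delta(1,b+k)$ and $\Delta(1+p,b+k)$ survive in $\MM$, and your count $f_2(\Gamma)=f_2(\hat\Delta)-2b$, are correct fillings-in of details the paper leaves implicit.

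There is, however, one genuine error: your justification of connectedness. In the $n=3p$ construction, $\Sigma=\big\langle\{\ob i,\ob{i+p},\ob{i+2p}\}:i=1,\dots,p\big\rangle$ is the \emph{disjoint} union of the $p$ triangles $\{r,r+p,r+2p\}$, $r=1,\dots,p$, so it is not connected for $p\geq 2$; you are importing the picture from subsection 3.1, where $\Sigma$ is a cycle of triangles. Connectedness of $\Delta_k$ is still true, but it must come from the complexes retained in $\MM$. For instance, for $i=2,\dots,p$ the complex $\Delta(i,1)$ lies in $\MM$: by the identity its two representations are $(i,1)$ and $(\ob{1+i+p},p-1)$, and neither can equal $(1,j')$ with $1\leq j'\leq b\leq p-2$ since the first forces $i=1$ and the second forces $j'=p-1$. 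Its facet $\{\ob i,\ob{i+p},\ob{1+i+p}\}$ joins the $\Sigma$-triangle through $\ob i$ to the one through $\ob{i+1}$, so all $p$ triangles of $\Sigma$, and hence all of $\Gamma\subset\Delta_k$, lie in a single component. With that repair the proof is complete and agrees with the paper's.
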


The proof of the above lemma is the same as that of Lemma \ref{3-3}.
(To prove that $\Delta_{k-1}\cap\langle G_{b+k}\rangle$ is generated by three edges,
use $\{1+p,1+(b+k)+p\},\{1+(b+k)+p,1+(b+k)+2p\} \in \Delta(1,b+k) \subset \Gamma$ and $\{1+p,1+(b+k)+2p\} \in \Delta(1+p,b+k) \subset \Gamma$.)

Let $\Delta =\Delta_c$.
Then the next lemma and Lemma \ref{2-5}(ii) guarantee the existence of a $2$-dimensional connected Buchsbaum complex
with the $h$-vector
$(1,x,y,-w)=(1,n-3,3(M-b)+\alpha,-(M-b)+c)$.

\begin{lemma}
\label{3-7}\ 
\begin{itemize}
\item[(i)] $\Delta \cap \langle G_b\rangle = \langle \{1+p,1+b+2p\},\{1+b+p,1+b+2p\}\rangle$.
\item[(ii)] $(\Delta\cup \langle G_b\rangle)\cap \langle \{1,1+p,1+b+p\}\rangle=\langle \{1,1+p\},\{1+p,1+b+p\}\rangle$.
\end{itemize}
\end{lemma}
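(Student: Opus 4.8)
The plan is to run the argument of Lemma \ref{3-4} essentially verbatim, the only subsection-specific point being the identification of the facet of $\Gamma$ that carries the edge $\{1+p,1+b+2p\}$; throughout I use $1\le b\le p-2$. The core step is to prove the analogue of the claim in Lemma \ref{3-4}, namely that the three edges
$$\{1,1+b+p\},\qquad \{1+p,1+b+p\},\qquad \{1,1+b+2p\}$$
are \emph{not} faces of $\Delta$. By Lemma \ref{3-5} both $\Gamma$ and $\Gamma\cup\Delta(1,b)$ are Buchsbaum and link-acyclic, and $\Delta(1,b)\not\subset\Gamma$, so $f_2(\Gamma\cup\Delta(1,b))=f_2(\Gamma)+2$. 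Feeding this into $f_2=h_0+h_1+h_2+h_3$ together with the relation $h_3=-\frac13 h_2$ for link-acyclic complexes (Lemma \ref{2-3}) forces $h(\Gamma\cup\Delta(1,b))=h(\Gamma)+(0,0,3,-1)$, hence $f_1(\Gamma\cup\Delta(1,b))=f_1(\Gamma)+3$. Thus $\Delta(1,b)$ carries exactly three edges not already in $\Gamma$; since its five edges are $\{1,1+p\}$, $\{1,1+b+p\}$, $\{1+p,1+b+p\}$, $\{1,1+b+2p\}$, $\{1+b+p,1+b+2p\}$, of which the first and last lie in $\Sigma$, the three displayed edges are precisely the new ones. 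Finally, $\Delta=\Delta_c$ is built from $\Gamma$ by Lemma \ref{2-5}(i), so $h_i(\Gamma)=h_i(\Delta)$ for $i\le 2$ and therefore $f_1(\Gamma)=f_1(\Delta)$; as $\Gamma\subset\Delta$, their edge sets coincide and the three edges are missing from $\Delta$ as well.

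Granting the claim, both parts are immediate. For (i), the three edges of $G_b$ are $\{1+p,1+b+p\}$, $\{1+p,1+b+2p\}$ and $\{1+b+p,1+b+2p\}$; here $\{1+b+p,1+b+2p\}\in\Sigma\subset\Delta$, the edge $\{1+p,1+b+2p\}$ lies in $\Delta$ (established below), and $\{1+p,1+b+p\}\notin\Delta$ by the claim, while $G_b\notin\Delta$ since one of its edges is missing. For (ii), the three edges of $\{1,1+p,1+b+p\}$ are $\{1,1+p\}$, $\{1,1+b+p\}$ and $\{1+p,1+b+p\}$; here $\{1,1+p\}\in\Sigma\subset\Delta$, the edge $\{1+p,1+b+p\}$ is a face of $\langle G_b\rangle$, and $\{1,1+b+p\}$ is excluded from $\Delta\cup\langle G_b\rangle$ because it is not in $\Delta$ by the claim and not in $\langle G_b\rangle$ as $1$ is not a vertex of $G_b$; the top face is excluded because this edge is.

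The step I expect to require genuine care is certifying that $\{1+p,1+b+2p\}\in\Delta$, since this is the only place the specific geometry of the $n=3p$ construction enters. The edge sits in the facet $\{1+p,1+2p,1+b+2p\}$ of $\Delta(1+p,b)$, so it is enough to show $\Delta(1+p,b)\in\MM$, i.e.\ that $\Delta(1+p,b)$ is none of the removed complexes $\Delta(1,1),\dots,\Delta(1,b)$. Using the $n=3p$ analogue of the identity $\Delta(i,j)=\Delta(1+i+3j,p-1-j)$, namely $\Delta(i,j)=\Delta(\ob{i+j+p},\,p-j)$, the complex $\Delta(1+p,b)$ admits exactly the two index presentations $(1+p,b)$ and $(1+b+2p,\,p-b)$; for $1\le b\le p-2$ neither has first coordinate $1$, whereas each removed complex $\Delta(1,j')$ does. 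Hence $\Delta(1+p,b)\in\MM$ and $\{1+p,1+b+2p\}\in\Gamma\subset\Delta$, completing the deduction.
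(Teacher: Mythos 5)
Your proof is correct and follows essentially the same route as the paper's: the counting argument $f_1(\Gamma\cup\Delta(1,b))=f_1(\Gamma)+3$ via link-acyclicity to identify the three missing edges, then the edge-by-edge check of both intersections. The only addition is your explicit verification via the identity $\Delta(i,j)=\Delta(\ob{i+j+p},p-j)$ that $\Delta(1+p,b)\in\MM$, a detail the paper leaves implicit but which checks out.
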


\begin{proof}
By using Lemmas \ref{3-5} and \ref{3-6},
one can prove $f_1(\Gamma \cup \Delta(1,b))=f_1(\Gamma)+3$
in the same way as in the proof of Lemma \ref{3-4}.
The complex $\Delta(1,b)$ has $5$ edges
$$\{1,1+p\},\{1+b+p,1+b+2p\},\{1,1+b+p\},\{1,1+b+2p\},\{1+p,1+b+p\}.$$
Since the first two edges are contained in $\Sigma \subset \Gamma$,
the latter three edges are not contained in $\Gamma$.
Since $h_i(\Gamma)=h_i(\Delta)$ for $i \leq 2$,
the set of edges in $\Gamma$ and that of $\Delta$ are same.
Hence these three edges are not in $\Delta$.

Then (i) holds since $\{1+p,1+b+2p\} \in \Delta(1+p,b) \subset \Delta$,
$\{1+b+p,1+b+2p\} \in \Sigma \subset \Delta$ and $\{1+p,1+b+p\} \not \in \Delta$,
and (ii) holds since $\{1,1+p\} \in \Sigma \subset \Delta$,
$\{1+p,1+b+p\} \in \langle G_b\rangle$ and $\{1,1+b+p\} \not \in \Delta \cup \langle G_b\rangle$.
\end{proof}

\subsection{Construction when $n=3p+1$}\ \\
Let
$$\Sigma = \big\langle \big\{ \{\ob{i-(p-1)}, \ob{i},\ob{i+(p+1)}\}:
i=1,2,\dots,n\big\} \big\rangle.$$
For $i=1,2,\dots,n$ and $j=1,2,\dots,p-2$,
let
$$\Delta(i,j)= \big\langle \{ \ob{i-j},\ob{i},\ob{i+(2p-j)}\},
\{\ob{1+i+p},\ob{i+(2p-j)},\ob{i}\} \big\rangle$$
and for $i=1,2,\dots,p$ let
$$\Delta(i,\infty)=\big\langle \{\ob{i},\ob{i+p},\ob{i+2p}\},
\{\ob{i+p},\ob{i+2p},\ob{i+3p}\}\big\rangle.$$
Let
$$\LL=\big\{
\Delta(i,j): i=1,2,\dots,n \mbox{ and }j=1,2,\dots,p-2 \big\}
\cup \big\{\Delta(i,\infty):i=1,2,\dots,p\big\}$$
and
$$
\hat \Delta = \Sigma \cup\left( \bigcup_{\Delta(i,j) \in \LL} \Delta(i,j) \right).$$
Note that
\begin{eqnarray}
\label{4.b}
&&\Sigma \cup \left(\bigcup_{ 1\leq i \leq n,\ 1 \leq j\leq p-2}\Delta(i,j)\right)\\
\nonumber
&&=\big\langle \big\{\{\ob{i-j},\ob i, \ob{i+(2p-j)}\}:i=1,2,\dots,n 
\mbox{ and }j=1,2,\dots,p-1\big\} \big\rangle.
\end{eqnarray}

\begin{lemma}
\label{3-8}
\ 
\begin{itemize}
\item[(i)] $\hat \Delta$ is Buchsbaum, link-acyclic, $h(\hat \Delta)=(1,n-3,3M,-M)$ and $\{p,n \} \not \in \hat \Delta$.
\item[(ii)] For any subset $\MM \subset \LL$, 
$\Sigma \cup(\bigcup_{\Delta(i,j)\in\MM}\Delta(i,j))$ is Buchsbaum and link-acyclic.
\end{itemize}
\end{lemma}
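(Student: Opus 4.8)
The plan is to follow the template of the proofs of Lemmas~\ref{3-2} and~\ref{3-5}, isolating the one genuinely new feature of the case $n=3p+1$: here $\hat\Delta$ fails to contain every edge, and the work is to show that the single missing edge is $\{p,n\}$. First I would dispose of the Buchsbaum and link-acyclic assertions exactly as before. The complex $\Sigma$ is Buchsbaum because each of its vertex links is connected, and for every generator in $\LL$ (including the $\Delta(i,\infty)$) one checks directly that the vertex links of $\Sigma\cup\Delta(i,j)$ stay connected; Lemma~\ref{2-4}(i) then makes every complex $\Sigma\cup(\bigcup_\MM\Delta(i,j))$ Buchsbaum, which gives the Buchsbaum halves of both (i) and (ii). For link-acyclicity, Lemma~\ref{2-4}(ii) reduces everything to $\hat\Delta$ itself, and since $\hat\Delta$ is Buchsbaum this will follow from Lemma~\ref{2-3} once I know $h(\hat\Delta)=(1,n-3,3M,-M)$, because that $h$-vector satisfies $h_3=-M=-\frac13 h_2$. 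Thus the whole proof comes down to the two face-number computations $f_2(\hat\Delta)=2M+n-2$ and $f_1(\hat\Delta)={n\choose 2}-1$, and the latter will simultaneously yield $\{p,n\}\notin\hat\Delta$.

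For $f_2$ I would count facets from the two sources. By the identity~(\ref{4.b}), the part $\Sigma\cup(\bigcup_{1\le i\le n,\,1\le j\le p-2}\Delta(i,j))$ is generated by the triangles $\{\ob{i-j},\ob i,\ob{i+(2p-j)}\}$ with $j=1,\dots,p-1$, contributing $n(p-1)$ facets, while the $p$ complexes $\Delta(i,\infty)$ contribute $2p$ facets. These two families are disjoint, since a $\Delta(i,\infty)$-facet has pairwise differences $p,p,2p$ whereas the differences $j,2p-j,2p$ of a facet from~(\ref{4.b}) never include $p$ when $1\le j\le p-1$; distinctness within each family is obtained by recovering the index from the vertex set. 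Summing gives $n(p-1)+2p=3p^2-1=2M+n-2$, as needed.

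The heart of the argument, and the step I expect to be the main obstacle, is pinning down the missing edge, so I would classify the edges of $\hat\Delta$ by their circulant difference on $\ZZ/n\ZZ$. Letting $j$ range over $1,\dots,p-1$ in the facets of~(\ref{4.b}) produces, for every $i$, all $n$ edges of each difference in $\{1,\dots,p-1\}\cup\{p+1,\dots,2p\}$; together with the opposite orientations $d\mapsto n-d$ this accounts for every edge except those of length $p$ (differences $p$ and $2p+1$). Since $\gcd(p,n)=\gcd(p,3p+1)=1$, the length-$p$ edges $\{v,\ob{v+p}\}$ form a single $n$-cycle, so there are exactly $n=3p+1$ of them. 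The complexes $\Delta(i,\infty)$, $i=1,\dots,p$, supply precisely the length-$p$ edges with tails $i,\ i+p,\ i+2p$, that is, all tails in $\{1,\dots,3p\}=\ZZ/n\ZZ\setminus\{n\}$; hence they provide all $3p=n-1$ of these edges except the one with tail $n$, namely $\{n,\ob{n+p}\}=\{p,n\}$. Therefore $\hat\Delta$ contains every edge but $\{p,n\}$, giving $f_1={n\choose 2}-1$ and $\{p,n\}\notin\hat\Delta$ at once. The delicacy here is entirely the modular bookkeeping — keeping the three difference-types of each facet straight and checking that the $\Delta(i,\infty)$ tails cover $\ZZ/n\ZZ$ with the single gap at $n$.
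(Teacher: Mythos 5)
Your proposal is correct and follows essentially the same route as the paper: Buchsbaumness via Lemma \ref{2-4}(i), link-acyclicity reduced to $\hat\Delta$ via Lemma \ref{2-4}(ii) and the computation of $h(\hat\Delta)$, and the identification of $\{p,n\}$ as the unique missing edge by splitting edges into the length-$p$ ones (covered, except for the tail at $n$, by the $\Delta(i,\infty)$) and all others (covered by the facets in (\ref{4.b})). Your difference/tail bookkeeping is just a more explicit version of the paper's case analysis on $k\ne p$ versus $k=p$.
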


\begin{proof}
The simplicial complex $\Sigma$ is Buchsbaum since its every vertex link
is connected.
Also, for any $\Delta(i,j) \in \LL$,
a routine computation shows that every vertex link of $\Sigma\cup\Delta(i,j)$ is connected.
Then the Buchsbaum property of (i) and (ii) follows from Lemma \ref{2-4}(i).

To prove the link-acyclic property,
it is enough to prove that $\hat \Delta$ is link-acyclic by Lemma \ref{2-4}(ii).
We will show $h(\hat \Delta)=(1,n-3,3M,-M)$,
equivalently $f(\hat \Delta)=(1,n,{n \choose 2}-1,2M+n-2)$.
It is easy to see that $f_2(\hat \Delta)=n(p-1)+2p=2M+n-2$.
We will show $f_1(\hat \Delta)={n \choose 2}-1$.

We claim that $\hat \Delta$ contains every $\{i,j\} \subset [n]$ except for $\{p,n \}$.
For any $\{i,j \} \subset [n]$,
there exists a $1 \leq k \leq 2p-1$ such that $j= \ob{i+k}$ or $i=\ob{j+k}$. We may assume $j= \ob{i+k}$.
If $k \ne p$ then we have either $\{i,j\} \in \Sigma$ or $\{i,j\} \in \Delta(i',j')$ for some $i',j'$ with $j' \ne \infty$ by (\ref{4.b}).
On the other hand, for every $1 \leq i \leq n-1$,
we have $\{\ob i, \ob{i+p}\} \in \Delta(i',\infty)$ for some $i'$.
Thus $\hat \Delta$ contains every $\{i,j\} \subset [n]$
such that $\{i,j\} \ne \{p,n\}$.
Finally,
since $\Sigma$ and any $\Delta(i,j) \in \LL$ with $j \ne \infty$
contain no elements of the form $\{\ob i, \ob{i+p}\}$
and since $\{p,n\}=\{n,\ob{n+p}\} \not \in \Delta(i',\infty)$ for $i'=1,2,\dots,p$,
we have $\{p,n\} \not \in \hat \Delta$ as desired.
\end{proof}

Let 
$\MM = \LL \setminus \{\Delta(1,1),\Delta(1,2),\dots,\Delta(1,b)\}$
and
$$\Gamma = \Sigma \cup \left ( \bigcup_{\Delta(i,j) \in \MM} \Delta(i,j) \right).$$
For $j=1,2,\dots,p-2$, let
$$G_j=\{\ob{1-j},1,2+p\}.$$
Note that $G_j \not \in \hat \Delta$.
Define
$$
\Delta_k=\Gamma \cup \langle G_{b+1} \rangle \cup \langle G_{b+2} \rangle
\cup \cdots \cup \langle G_{b+k} \rangle
$$
for $k=0,1,\dots,c$, where $\Delta_0=\Gamma$.

\begin{lemma}
\label{3-9}
For $k=0,1,\dots,c$,
the simplicial complex $\Delta_k$ is connected, Buchsbaum
and $h(\Delta_k)=(1,n-3,3(M-b),-(M-b)+k)$.
\end{lemma}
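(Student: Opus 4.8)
The plan is to mirror the proof of Lemma~\ref{3-3} almost verbatim, adapting the combinatorial bookkeeping to the present indexing. Connectedness of each $\Delta_k$ is immediate, since every generator $G_{b+i}$ contains the vertex $1$, which also lies in $\Gamma$. The Buchsbaum and link-acyclic properties of $\Gamma$ come for free from Lemma~\ref{3-8}(ii). To compute $h(\Gamma)$ I would copy the argument of Lemma~\ref{3-3}: $\Gamma$ and $\hat\Delta$ share the vertex set $[n]$, whence $h_1(\Gamma)=n-3$; deleting $\Delta(1,1),\dots,\Delta(1,b)$ removes $2b$ facets, so $f_2(\Gamma)=f_2(\hat\Delta)-2b$; and link-acyclicity gives $h_3(\Gamma)=-\frac{1}{3}h_2(\Gamma)$ via Lemma~\ref{2-3}. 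Substituting into $f_2=h_0+h_1+h_2+h_3$ then forces $h(\Gamma)=(1,n-3,3(M-b),-(M-b))$, settling the case $k=0$.

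For $k\ge 1$ I would apply Lemma~\ref{2-5}(i) inductively: it is enough to check that $\Delta_{k-1}\cap\langle G_{b+k}\rangle$ is precisely the boundary of the triangle $G_{b+k}=\{\ob{1-(b+k)},1,2+p\}$. That $G_{b+k}\notin\Delta_{k-1}$ is clear, because $G_{b+k}\notin\hat\Delta\supset\Gamma$ and the $G_{b+i}$ are distinct. For the three boundary edges I would exhibit each inside a single member of $\LL$ contained in $\Gamma$: the first facet of $\Delta(1,b+k)$ carries $\{\ob{1-(b+k)},1\}$, its second facet $\{\ob{2+p},\ob{1+2p-(b+k)},1\}$ carries $\{1,2+p\}$, and the first facet of $\Delta(2+p,b+k)$, which after the reduction $\ob{(2+p)+2p-(b+k)}=\ob{1-(b+k)}$ equals $\{\ob{2+p-(b+k)},2+p,\ob{1-(b+k)}\}$, carries $\{\ob{1-(b+k)},2+p\}$. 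Each application of Lemma~\ref{2-5}(i) then raises $h_3$ by $1$, yielding $h(\Delta_k)=(1,n-3,3(M-b),-(M-b)+k)$.

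The delicate points, and where I expect the real work, are the facet count $f_2(\Gamma)=f_2(\hat\Delta)-2b$ and the memberships $\Delta(1,b+k),\Delta(2+p,b+k)\in\MM$. Both rest on the identification $\Delta(i,j)=\Delta(\ob{i+2p-j},\,p-1-j)$, which I would verify by checking that the map $(i,j)\mapsto(\ob{i+2p-j},p-1-j)$ merely interchanges the two facets of $\Delta(i,j)$; this is the feature absent from subsection~3.1 that most complicates the argument. The identification confirms that $\Delta(1,1),\dots,\Delta(1,b)$ are distinct and contribute $2b$ distinct facets, validating the count. For membership in $\MM$, the only danger is that $\Delta(2+p,b+k)$ or $\Delta(1,b+k)$ secretly equals some removed $\Delta(1,l)$ with $l\le b$; since $b+k>b$, comparing index forms rules this out for $\Delta(1,b+k)$, while for $\Delta(2+p,b+k)$ I would use a vertex argument: both facets of every $\Delta(1,l)$ contain the vertex $1$, whereas the facet $\{\ob{2+p-(b+k)},2+p,\ob{1-(b+k)}\}$ does not contain $1$ when $1\le b+k\le p-2$. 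Hence $\Delta(2+p,b+k)\ne\Delta(1,l)$, so it lies in $\MM\subset\Gamma$ and the edge $\{\ob{1-(b+k)},2+p\}$ is indeed in $\Gamma$.
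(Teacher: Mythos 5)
Your proof is correct and follows essentially the same route as the paper, which simply refers back to the proof of Lemma 3.3 and supplies the same hint you use: the edges $\{\ob{1-(b+k)},1\},\{1,2+p\}$ lie in $\Delta(1,b+k)\subset\Gamma$ and $\{\ob{1-(b+k)},2+p\}$ lies in $\Delta(2+p,b+k)\subset\Gamma$. The extra details you supply (the involution $\Delta(i,j)=\Delta(\ob{i+2p-j},p-1-j)$ behind the facet count and the membership checks for $\MM$) are accurate verifications of facts the paper leaves implicit.
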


The proof of the above lemma is the same as that of Lemma \ref{3-3}.
(To prove $\Delta_{k-1} \cap \langle G_{b+k} \rangle$ is generated by three edges, use
$\{\ob{1-(b+k)},1\},\{1,2+p\} \in \Delta(1,b+k) \subset \Gamma$
and $\{\ob{1-(b+k)},2+p\} \in \Delta(2+p,b+k) \subset \Gamma$.)

Let $\Delta=\Delta_c$.
We will construct a $2$-dimensional
connected Buchsbaum complex with the $h$-vector
$(1,x,y,-w)=(1,n-3,3(M-b)+\alpha,-(M-b)+c)$.
If $\alpha=0$ then $\Delta$ satisfies the desired conditions.
Suppose $\alpha>0$.\medskip

\textit{Case 1}:
If $b=0$ then $\alpha=1$ and $\Gamma=\hat \Delta$ since $3M={n-2 \choose 2}-1$ and $y \leq {n-2 \choose 2}$.
Since the set of edges in $\Delta$ and that of $\Gamma=\hat \Delta$ 
are same and since $\{p,n\} \not \in \hat \Delta$,
it follows that $\Delta \cap \langle \{1,p,n\}\rangle =
\langle \{1,p\}, \{1,n\}\rangle$.
By Lemma \ref{2-5}(ii),
$\Delta \cup \langle \{1,p,n\}\rangle$ satisfies the desired conditions.
\medskip

\textit{Case 2}:
Suppose $b>0$.
Then the next lemma and Lemma \ref{2-5}(ii) guarantee the existence of
a Buchsbaum complex with the desired properties.

\begin{lemma}
\label{3-10}\ 
\begin{itemize}
\item[(i)] $\Delta \cap \langle G_b\rangle = \langle \{\ob{1-b},2+p\},\{1,2+p\}\rangle$.
\item[(ii)] $(\Delta\cup \langle G_b\rangle)\cap \langle \{\ob{1-b},1,1+(2p-b)\}\rangle=\langle \{\ob{1-b},1\},\{\ob{1-b},1+(2p-b)\}\rangle$.
\end{itemize}
\end{lemma}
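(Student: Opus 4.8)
The plan is to reproduce, step by step, the argument of Lemma~\ref{3-4}, translated into the arithmetic modulo $n=3p+1$. As there, everything rests on an edge count for the removed complex $\Delta(1,b)$: once one knows exactly which edges of $\Delta(1,b)$ survive in $\Delta$, both (i) and (ii) reduce to locating a handful of edges and two triangles among $\Sigma$, the remaining $\Delta(i,j)$, and $\langle G_b\rangle$.

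First I would show $f_1(\Gamma\cup\Delta(1,b))=f_1(\Gamma)+3$, exactly as in the proof of Lemma~\ref{3-4} but invoking Lemmas~\ref{3-8} and~\ref{3-9}. By Lemma~\ref{3-8}(ii) both $\Gamma$ and $\Gamma\cup\Delta(1,b)$ are Buchsbaum and link-acyclic (the latter because $\MM\cup\{\Delta(1,b)\}\subset\LL$), and $\Delta(1,b)\not\subset\Gamma$ gives $f_2(\Gamma\cup\Delta(1,b))=f_2(\Gamma)+2$. Since no new vertices are introduced, the link-acyclic relation $h_3=-\frac13 h_2$ together with $f_2=1+h_1+h_2+h_3$ forces $h_2$ to rise by $3$, hence $f_1$ by $3$. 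Now $\Delta(1,b)=\langle\{\ob{1-b},1,\ob{1+(2p-b)}\},\{2+p,\ob{1+(2p-b)},1\}\rangle$ has the five edges
\[
\{\ob{1-b},\ob{1+(2p-b)}\},\ \{1,2+p\},\ \{\ob{1-b},1\},\ \{1,\ob{1+(2p-b)}\},\ \{2+p,\ob{1+(2p-b)}\}.
\]
The edge set of $\Sigma$ consists precisely of the pairs at cyclic distance $p-1$ or $p+1$ (each generating triangle contributes differences $p-1$, $p+1$ and $2p\equiv-(p+1)$), so the first two listed edges, having differences $2p$ and $p+1$, lie in $\Sigma\subset\Gamma$. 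As exactly three edges are new, the latter three edges are not in $\Gamma$. Finally $h_i(\Gamma)=h_i(\Delta)$ for $i\le2$ gives $f_1(\Gamma)=f_1(\Delta)$, and with $\Gamma\subset\Delta$ this means $\Gamma$ and $\Delta$ have the same edges; hence $\{\ob{1-b},1\}$, $\{1,\ob{1+(2p-b)}\}$ and $\{2+p,\ob{1+(2p-b)}\}$ all lie outside $\Delta$.

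From here (i) and (ii) are direct. For (i), among the edges of $G_b=\{\ob{1-b},1,2+p\}$ we have $\{1,2+p\}\in\Sigma\subset\Delta$ and $\{\ob{1-b},2+p\}\in\Delta(2+p,b)\subset\Gamma\subset\Delta$ (here one checks $\ob{(2+p)+(2p-b)}=\ob{1-b}$, so this edge is a side of the first facet of $\Delta(2+p,b)$), while $\{\ob{1-b},1\}\notin\Delta$; and $G_b\notin\Delta$ since $G_b\notin\hat\Delta\supset\Gamma$ and $G_b$ is none of the adjoined triangles. For (ii), the triangle $\{\ob{1-b},1,\ob{1+(2p-b)}\}$ is the first facet of $\Delta(1,b)$; its edge $\{\ob{1-b},\ob{1+(2p-b)}\}$ lies in $\Sigma\subset\Delta$, its edge $\{\ob{1-b},1\}$ lies in $\langle G_b\rangle$, but its edge $\{1,\ob{1+(2p-b)}\}$ is excluded from $\Delta$ and is not an edge of $G_b$ (its vertex $\ob{1+(2p-b)}$ is none of $\ob{1-b},1,2+p$, using $1\le b\le p-2$), so it stays outside $\Delta\cup\langle G_b\rangle$; the triangle itself is therefore absent as well.

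The only real difficulty is bookkeeping with the residues $\ob{\,\cdot\,}$ under $1\le b\le p-2$ and $p\ge2$: verifying the description of $\Sigma$ by distances, recording the identity $\ob{(2+p)+(2p-b)}=\ob{1-b}$, and ruling out accidental coincidences among the vertices $\ob{1-b}$, $1$, $2+p$, $\ob{1+(2p-b)}$. Once these are in place, the formal steps are identical to those of Lemma~\ref{3-4}.
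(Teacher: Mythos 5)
Your proposal is correct and follows essentially the same route as the paper: establish $f_1(\Gamma\cup\Delta(1,b))=f_1(\Gamma)+3$ via the link-acyclic count, identify the two edges of $\Delta(1,b)$ lying in $\Sigma$ so that the remaining three are absent from $\Gamma$ and hence from $\Delta$, and then read off (i) and (ii) edge by edge. You even correct the paper's reference (it cites Lemmas 3.5 and 3.6 where 3.8 and 3.9 are meant) and supply the modular arithmetic the paper leaves implicit.
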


\begin{proof}
By using Lemmas \ref{3-5} and \ref{3-6},
one can prove $f_1(\Gamma \cup \Delta(1,b))=f_1(\Gamma)+3$
in the same way as in the proof of Lemma \ref{3-4}.
The complex $\Delta(1,b)$ has $5$ edges
$$\{1,2+p\},\{\ob{1-b},1+(2p-b)\},\{\ob{1-b},1\},\{1,1+(2p-b)\},\{2+p,1+(2p-b)\}.$$
Since the first two edges are contained in $\Sigma \subset \Gamma$,
the latter three edges are not contained in $\Gamma$.
Since the set of edges in $\Gamma$ and that of $\Delta$ are same,
these three edges are not in $\Delta$.

Then (i) holds since $\{\ob{1-b},2+p\} \in \Delta(2+p,b) \subset \Delta$,
$\{1,2+p\} \in \Sigma \subset \Delta$ and $\{\ob{1-b},1\} \not \in \Delta$,
and (ii) holds since $\{\ob{1-b},1\} \in \langle G_b\rangle$,
$\{\ob{1-b},1+(2p-b)\} \in \Sigma \subset \Delta$ and $\{1,1+(2p-b)\} \not \in \Delta \cup \langle G_b\rangle$.
\end{proof}

\section{Proof of Theorem \ref{1-2} and open problems}

To prove Theorem \ref{1-2},
we need the following easy fact:
If $\Delta$ is the disjoint union of $2$-dimensional simplicial complexes
$\Gamma$ and $\Gamma'$ then
$$h(\Delta)=h(\Gamma) +h(\Gamma')+(-1,3,-3,1).$$

\begin{proof}[Proof of Theorem \ref{1-2}]
We first prove the `only if' part.
If the vectors $(1,h_1,h_2,h_3)$ and $(1,h'_1,h'_2,h'_3)$
are $M$-vectors then $(1,h_1+h_1',h_2+h_2',h_3+h_3')$ is also an $M$-vector.
Thus, if $(1,h_1,h_2,h_3)$ and $(1,h'_1,h'_2,h'_3)$
satisfy the conditions of Theorem \ref{1-1} then $(1,h_1+h_1',h_2+h_2',h_3+h_3')$ satisfies the same conditions.
Let $\Delta$ be a $2$-dimensional Buchsbaum complex
with the connected components $\Delta_1,\dots,\Delta_{k+1}$.
Since each $\Delta_j$ is a $2$-dimensional connected Buchsbaum complex,
$$h(\Delta)=\left(1,\sum_{j=1}^{k+1} h_1(\Delta_j),\sum_{j=1}^{k+1} h_2(\Delta_j),\sum_{j=1}^{k+1} h_3(\Delta_j)\right)
+(0,3k,-3k,k)$$
satisfies the desired conditions.

Next, we prove the `if' part.
Suppose that $h'=(1,h'_1,h'_2,h'_3) \in \ZZ^4$ satisfies the conditions of Theorem \ref{1-1}.
Then there exists a $2$-dimensional Buchsbaum complex $\Delta$ with $h(\Delta)=h'$.
Let $\Gamma$ be the disjoint union of $\Delta$ and $k$ copies of $2$-dimensional simplexes.
Then $\Gamma$ is Buchsbaum and
$h(\Gamma)=h'+(0,3k,-3k,k)$ since the $h$-vector of a $2$-dimensional simplex is $(1,0,0,0)$.
\end{proof}

It will be interesting to study a generalization of Theorems \ref{1-1} and \ref{1-2} for higher dimensional Buchsbaum complexes.
On the other hand,
since properties of Buchsbaum complexes heavily depend on their Betti numbers, it might be more natural to study $h$-vectors of Buchsbaum complexes for fixed Betti numbers.
The strongest known relation between $h$-vectors and Betti numbers of Buchsbaum complexes
is the result of Novik and Swartz \cite[Theorems 3.5 and 4.3]{NS}.
In the special case when $\Delta$ is a $2$-dimensional connected Buchsbaum complex,
the result of Novik and Swartz says
\begin{eqnarray}
\label{ni}
\begin{array}{ll}
\bullet& (1,h_1(\Delta),h_2(\Delta)) \mbox{ is an $M$-vector;}\\
\bullet& h_2(\Delta) \geq 3 \beta_1(\Delta) \mbox{ and } h_3(\Delta) +\beta_1(\Delta) \leq (h_2(\Delta)-3 \beta_1(\Delta))^{\langle 2 \rangle }.
\end{array}
\end{eqnarray}
Note that $h_3(\Delta)+\beta_1(\Delta)=\beta_2(\Delta)$ in this case.
It was asked in \cite[Problem 7.10]{NS}
if there exist other restrictions on $h$-vectors of Buchsbaum complexes for fixed Betti numbers.
Recently,
Terai and Yoshida \cite[Corollary 5.1]{TY2} proved that,
for every $(d-1)$-dimensional Buchsbaum complex $\Delta$ on $[n]$,
if $d \geq 3$ and $h_0(\Delta) + \cdots +h_d(\Delta) \geq {h_1+d \choose d} -3h_1 +2$ then $\Delta$ is Cohen--Macaulay.
This result of Terai and Yoshida gives a restriction on $h$-vectors and Betti numbers of Buchsbaum complexes which does not follow from (\ref{ni}).
For example, we have

\begin{proposition}
\label{4-1}
There exist no $2$-dimensional connected Buchsbaum complexes $\Delta$ such that
$\beta_1(\Delta)=1$, $\beta_2(\Delta)=4$ and $h(\Delta)=(1,3,6,3)$.
\end{proposition}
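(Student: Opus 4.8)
The plan is to obtain a contradiction by showing that the prescribed numerical data force $\Delta$ to be Cohen--Macaulay, which is incompatible with $\beta_1(\Delta)=1$. The point worth noticing first is that $(1,3,6,3)$ together with $\beta_1=1$ passes every inequality in the Novik--Swartz conditions (\ref{ni}): indeed $(1,3,6)$ is an $M$-vector with $h_2=6=h_1^{\langle 1\rangle}$, one has $h_2=6\geq 3=3\beta_1$, and $h_3+\beta_1=4=(h_2-3\beta_1)^{\langle 2\rangle}$, two of these relations being in fact equalities. Thus (\ref{ni}) alone cannot exclude this complex, and the real leverage must come from the Terai--Yoshida bound quoted above, on whose boundary this example sits exactly.

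Concretely, I would suppose for contradiction that such a $\Delta$ exists, say on $[n]$ with $n=f_0(\Delta)=h_1+3=6$. I would then compute the total $h_0(\Delta)+h_1(\Delta)+h_2(\Delta)+h_3(\Delta)=1+3+6+3=13$ and compare it with the Terai--Yoshida threshold for $d=3$ and $h_1=3$, namely ${h_1+d \choose d}-3h_1+2={6 \choose 3}-9+2=20-9+2=13$. Since $13\geq 13$, the result of Terai and Yoshida stated above applies and forces $\Delta$ to be Cohen--Macaulay.

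To finish, I would apply the definition of Cohen--Macaulayness to the empty face $F=\emptyset$: here $|F|=0$ and $d-1-|F|=2$, so $\beta_i(\lk_\Delta(\emptyset))=\beta_i(\Delta)=0$ for every $i\neq 2$, and in particular $\beta_1(\Delta)=0$. This contradicts the hypothesis $\beta_1(\Delta)=1$, and the argument is complete. I do not expect a genuine obstacle here: the whole proof rests on the single numerical coincidence that the data lie precisely on the Terai--Yoshida boundary $\sum_i h_i={h_1+3 \choose 3}-3h_1+2$, so once this is spotted the contradiction is immediate. The only real care needed is in checking that the hypotheses genuinely satisfy (\ref{ni}), which is exactly what makes this non-existence interesting rather than a formal consequence of the previously known inequalities.
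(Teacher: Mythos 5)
Your proposal is correct and follows exactly the argument the paper gives: verify that $h_0+h_1+h_2+h_3=13$ meets the Terai--Yoshida threshold ${6 \choose 3}-9+2=13$, conclude that $\Delta$ would have to be Cohen--Macaulay, and derive the contradiction $\beta_1(\Delta)=0$. Your additional check that the data satisfy (\ref{ni}) matches the paper's remark that this non-existence is not detectable from the Novik--Swartz conditions alone.
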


The conditions of Betti numbers and an $h$-vector in Proposition \ref{4-1} satisfy (\ref{ni}).
However, the vector $h=(1,3,6,3)$ satisfies the assumption of the result
of Terai and Yoshida
$$h_0+h_1+h_2+h_3=13 \geq { 6 \choose 3 } -3 \times 3 +2,$$
and therefore any $2$-dimensional connected Buchsbaum complex $\Delta$
with $h(\Delta)=h$ must satisfy $\beta_1(\Delta)=0$.

It seems likely that, to characterize $h$-vectors of Buchsbaum complexes for fixed Betti numbers, we need further restrictions on $h$-vectors and Betti numbers.
Here,
we propose the following problem.

\begin{problem}
\label{pro}
Find a new inequality on $h$-vectors and Betti numbers of Buchsbaum complexes
which explain \cite[Corollary 5.1]{TY2}.
\end{problem}

\bigskip

\noindent
\textbf{Acknowledgements.}
I would like to thank the referee for pointing out that Proposition
\ref{4-1} follows from \cite[Corollary 5.1]{TY2}.
The author is supported by JSPS Research Fellowships for Young Scientists

\end{document}